\def\@cite#1#2{{\m@th\upshape\bfseries%
[{#1\if@tempswa{\m@th\upshape\mdseries, #2}\fi}]}}
\theoremstyle{plain}
\newtheorem{thm}{Theorem}[section]
\newtheorem{prop}[thm]{Proposition}
\newtheorem{lem}[thm]{Lemma}
\theoremstyle{definition}
\newcommand{\bC}{{\mathbb{C}}}
\newcommand{\bR}{{\mathbb{R}}}
\newcommand{\bZ}{{\mathbb{Z}}}
\newcommand{\W}{{\mathcal{W}}}
\renewcommand{\S}{{\mathcal{S}}}
\newcommand{\ad}{\operatorname{ad}}
\newcommand{\Ad}{\operatorname{Ad}}
\newcommand{\im}{\operatorname{Im}}
\newcommand{\rank}{\operatorname{rank}}
\newcommand{\spa}{\operatorname{span}}
\newcommand{\corank}{\operatorname{corank}}
\newcommand{\Fg}{{\mathfrak{g}}}
\newcommand{\Ft}{{\mathfrak{t}}}
\newcommand{\Fn}{{\mathfrak{n}}}
\newcommand{\su}{{\mathfrak{su}}}
\newcommand{\so}{{\mathfrak{so}}}
\renewcommand{\sp}{{\mathfrak{sp}}}
\newcommand{\id}{\operatorname{Id}}
\begin{document}


\title[Sums of adjoint orbits]{Sums of adjoint orbits \\ and $L^2$--singular dichotomy for $SU(m)$ }
%
\author[A.Wright]{Alex~Wright\\The University of Chicago}
\address{Math.\ Dept.\\U. Chicago\\
5734 S. University Avenue\\
Chicago, Illinois 60637}
\email{amwright@math.uchicago.edu}
%
\date{}


\begin{abstract}
Let $G$ be a real compact connected simple Lie group, and $\Fg$ its Lie algebra. We study the problem of determining, from root data, when a sum of adjoint orbits in $\Fg$, or a product of conjugacy classes in $G$, contains an open set.

Our general methods allow us to determine exactly which sums of adjoint orbits in $\su(m)$ and products of conjugacy classes in $SU(m)$ contain an open set, in terms of the highest multiplicities of eigenvalues. 

For $\su(m)$ and $SU(m)$ we show $L^2$--singular dichotomy: The convolution of invariant measures on adjoint orbits, or conjugacy classes, is either singular to Haar measure or in $L^2$. 

\noindent \textbf{Keywords:} (Co-)adjoint orbit; compact Lie group; orbital measure; special unitary group; moment map.
\end{abstract}

\maketitle


\section{Introduction}\label{S:intro}

We will consider a real compact connected simple Lie group $G$ with Lie algebra $\Fg$. The adjoint orbit of $X\in \Fg$ is defined to be $O_X=\{\Ad(g)X: g\in G\}$, and the conjugacy class of $x\in G$ is defined to be $C_x=\{gxg^{-1}: g\in G\}$. We wish to determine when a sum of adjoint orbits, or product of conjugacy classes, contains an open set. In this paper we give a sufficient condition, and a necessary condition, for this to occur. For the special unitary group, these two results allow a complete solution to the problem.

\begin{thm}\label{T:suopens}
Suppose $X_1,\ldots, X_k\in \su(m)$ (or $x_1,\ldots, x_k\in SU(m)$) and that $X_i$'s (respectively $x_i$'s) highest eigenvalue multiplicity is $q_i$. Then the sum of adjoint orbits $\sum_{i=1}^k O_{X_i}$ (respectively the product of conjugacy classes $\prod_{i=1}^k C_{x_i}$) contains an open set if and only if $$\sum_{i=1}^k q_i/m\leq k-1$$ and the tuple of $X_i$ (respectively $x_i$) does not consist of exactly two matrices each with exactly two eigenvalues of equal multiplicity at least two. 
\end{thm}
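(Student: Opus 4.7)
The plan is to translate the problem into a linear-algebraic question about simultaneous rotations of centralizer subalgebras, and then treat necessity and sufficiency separately. Set $F\colon G^k \to \Fg$, $F(g_1,\ldots,g_k) = \sum_i \Ad(g_i) X_i$. The image $\sum_i O_{X_i}$ contains an open set in $\Fg$ if and only if $dF$ is surjective somewhere; since the differential sends $(\xi_i)$ to $\sum_i [\xi_i, \Ad(g_i)X_i]$, its image is $\sum_i [\Fg, \Ad(g_i) X_i]$, which by Killing-form duality is the orthogonal complement of $\bigcap_i \Ad(g_i) Z_\Fg(X_i)$. Thus the question reduces to whether we can conjugate the centralizers so that their common intersection is trivial.

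For \textbf{necessity}, write $X_i = \lambda_i I_m + Y_i$, where $\lambda_i$ is the eigenvalue of $X_i$ of highest multiplicity $q_i$, so $Y_i$ has rank $m - q_i$. Then $\sum_i \Ad(g_i) X_i = (\sum_i \lambda_i) I_m + \sum_i \Ad(g_i) Y_i$, and the second summand has rank at most $km - \sum_i q_i$. If $\sum_i q_i / m > k - 1$, this rank is less than $m$, so $\sum_i \lambda_i$ is an eigenvalue of every element of the image with multiplicity at least $\sum_i q_i - (k-1)m \ge 1$, forcing the image into a proper algebraic subvariety of $\Fg$ and precluding openness. For the exceptional case ($k = 2$, each $X_i$ having two eigenvalues of equal multiplicity $m/2 \ge 2$), write $iX_i = a_i(2P_i - I_m)$ with $P_i$ a rank-$m/2$ orthogonal projection. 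Using the CS decomposition of $\bC^m$ with respect to $\im P_1$ and $\Ad(g)\im P_2$, the $2$-dimensional generic invariant blocks contribute eigenvalues of $i(X_1+\Ad(g)X_2) = 2a_1 P_1 + 2a_2 \Ad(g)P_2 - (a_1+a_2)I_m$ in symmetric pairs (each block trace is $0$), while the degenerate intersections contribute eigenvalues $\pm(a_1+a_2)$ and $\pm(a_1-a_2)$ with equal multiplicities by a dimension count. Hence the spectrum of $X_1 + \Ad(g) X_2$ is symmetric around $0$ for every $g$, cutting $\su(m)$ by $m/2 - 1 \ge 1$ independent conditions and ruling out openness.

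For \textbf{sufficiency}, assume $\sum_i q_i/m \le k - 1$ and the exceptional case is avoided. I would apply the general sufficient condition developed earlier in the paper for arbitrary compact simple $G$, which guarantees the existence of $(g_i)$ with $\bigcap_i \Ad(g_i) Z_\Fg(X_i) = 0$ under a root-theoretic criterion. For $G = SU(m)$ of type $A_{m-1}$, each $Z_\Fg(X_i)$ is the standard Levi subalgebra associated to the partition of $m$ by eigenvalue multiplicities of $X_i$, and the general criterion becomes an explicit combinatorial inequality on these partitions. One verifies that this inequality is equivalent to $\sum_i q_i / m \le k - 1$ outside the exceptional configuration, and one realizes it by selecting explicit permutation-type $g_i$'s that spread the block structures into generic position. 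The conjugacy-class case for $SU(m)$ follows in parallel: the differential of the product map $(g_i) \mapsto \prod_i g_i x_i g_i^{-1}$ at any point has image whose annihilator is again an intersection of centralizer Lie algebras, so the same reduction to partitions applies, and the necessity obstructions on the group side correspond precisely to the rank and spectral-symmetry obstructions on the Lie-algebra side.

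The \textbf{main obstacle} is verifying sufficiency in the boundary regime $\sum_i q_i / m = k - 1$ while isolating the exceptional case. Close neighbors such as $k = 2$ with partitions $(m/2+1, m/2-1)$ and $(m/2, m/2)$, or $k = 3$ with three matrices of type $(m/2, m/2)$, must be shown to admit some $(g_i)$ realizing trivial centralizer intersection; this requires exhibiting explicit $g_i$'s and directly checking the relevant root-theoretic combinatorics for type $A$. The interplay between the rank obstruction (forcing $\sum_i q_i/m \le k-1$) and the spectral-symmetry obstruction (forcing avoidance of the exceptional case) is the conceptual core of the theorem, and showing that together they give a sharp sufficient condition is the technical heart of the proof.
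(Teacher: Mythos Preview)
Your necessity arguments are correct and genuinely different from the paper's. The paper proves singularity via the transversality criterion (Theorem~\ref{T:sing}): for the rank obstruction it exhibits a corank-one subsystem $\Psi$ and Weyl translates making the sets $\sigma_i N_i\cap N_\Psi$ disjoint, and for the $(A_r\times A_r,A_r\times A_r)$ case it does the same with a more delicate choice of $\Psi$. Your rank argument ($X_i=\lambda_i I+Y_i$ with $\operatorname{rank}Y_i=m-q_i$, hence every element of the sum has $\sum_i\lambda_i$ as an eigenvalue) and your CS-decomposition argument (the block analysis forcing the spectrum of $X_1+\Ad(g)X_2$ to be symmetric about $0$) are more elementary and more transparent; they bypass root systems entirely for this direction. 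This is a genuine simplification.

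The sufficiency half, however, has a real gap. You correctly reduce to the centralizer-intersection criterion (the paper's Proposition~\ref{P:main}) and correctly point to the general root-theoretic sufficient condition (Theorem~\ref{T:open}), but the sentence ``one verifies that this inequality is equivalent to $\sum_i q_i/m\le k-1$ outside the exceptional configuration'' is exactly the content of the proof, and you have not supplied it. What is needed is a sharp upper bound on $|\Phi_{X_i}\cap N_\Psi|$ for every corank-one $\Psi$; the paper obtains this via a nontrivial combinatorial lemma (Lemma~\ref{L:eclasses1}): if $\sim$ is an equivalence relation on $\{1,\dots,m\}$ with all classes of size at most $w$, then the number of $\sim$-related pairs straddling a cut $\{1,\dots,c\}\mid\{c+1,\dots,m\}$ is at most $\tfrac{w}{m}c(m-c)$, with equality only when there are $m/w$ equal-sized classes evenly distributed across the cut. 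This bound is what converts the root criterion into $\sum_i(r_i+1)\le(k-1)(n+1)$, and its equality analysis is what isolates the exceptional tuple on the boundary. Without this lemma (or a substitute), the sufficiency direction is not proved. Your alternative suggestion of ``selecting explicit permutation-type $g_i$'s'' is not how the paper proceeds---the paper's argument is non-constructive, going through a rank estimate for the map $f_Z$ and a dimension count on its zero set---and producing explicit $g_i$'s that work uniformly across all boundary partitions (e.g.\ three copies of type $(m/2,m/2)$, or the pair $(m/2+1,m/2-1),(m/2,m/2)$ you mention) would itself require nontrivial work that you have not indicated how to carry out.
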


Each adjoint orbit and conjugacy class supports a natural $G$--invariant probability measure. The convolution of such measures is supported on a sum of adjoint orbits or a product of conjugacy classes, and is absolutely continuous to Haar measure if and only if this sum, or product, contains an open set \cite{Ra2}. 

After having proven Theorem \ref{T:suopens}, with only very little extra effort, we get the following \emph{$L^2$--singular dichotomy}. Note that since the measures are compactly supported, containment in $L^2$ implies containment in $L^1$. As can be seen for the case of two copies of the same orbit in $\su(2)$, the Radon-Nikodym derivative of the convolution need not be bounded. 

\begin{thm}\label{T:dich}
Let $\mu_1,\ldots, \mu_k$ be invariant measures on adjoint orbits in $\su(m)$. Then either $\mu_1 *\cdots *\mu_k$ is singular to Haar measure on $\su(m)$ or $\mu_1*\cdots *\mu_k\in L^2(\su(m))$. The same statement holds for invariant measures on conjugacy classes in $SU(m)$. 
\end{thm}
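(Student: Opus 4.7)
The plan is to reduce the dichotomy to a Fourier/Plancherel computation and invoke Theorem \ref{T:suopens} to identify which side of the dichotomy occurs. By Ragozin's theorem (already cited as \cite{Ra2}), the convolution $\mu_1*\cdots*\mu_k$ is absolutely continuous with respect to Haar measure if and only if the associated sum of orbits (resp.\ product of conjugacy classes) contains an open set; otherwise, its support lies in a set of Haar measure zero and the convolution is singular. Combined with Theorem \ref{T:suopens}, this already disposes of the singular side: whenever the multiplicity inequality fails, or we are in the exceptional two-matrix case, the convolution is automatically singular. The content to be proved is therefore: if $\sum q_i/m\leq k-1$ and we are not in the exceptional case, then $\mu_1*\cdots*\mu_k\in L^2$.

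For the $SU(m)$ statement I would apply Plancherel on $SU(m)$. Each $\mu_i$ is conjugation invariant, so by Schur's lemma its Fourier coefficient on an irreducible representation $\pi$ is a scalar multiple of the identity, $\hat\mu_i(\pi)=(\chi_\pi(x_i)/d_\pi)\cdot I$. A direct computation then gives
$$\|\mu_1*\cdots*\mu_k\|_{L^2(SU(m))}^2=\sum_\pi \frac{\prod_{i=1}^k|\chi_\pi(x_i)|^2}{d_\pi^{2k-2}},$$
so the task is to show this series converges precisely under the open-set condition of Theorem \ref{T:suopens}. For the Lie algebra statement, I would run the parallel argument on the Euclidean space $\su(m)$: the Euclidean Fourier transform of an invariant orbital measure is given by the Harish-Chandra/Kirillov orbital integral, and the analogous Plancherel identity reduces $\|\mu_1*\cdots*\mu_k\|_{L^2(\su(m))}^2$ to an integral of products of orbital Fourier transforms over $\su(m)^*$.

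The key analytic input is a character-ratio (or orbital-integral) bound in terms of the highest eigenvalue multiplicity: schematically,
$$\frac{|\chi_\pi(x)|}{d_\pi}\ \lesssim\ d_\pi^{\,q(x)/m-1}$$
where $q(x)$ is the highest multiplicity of $x$. Multiplying these bounds over $i$, the summand above decays like $d_\pi^{\,2(\sum q_i/m-(k-1))}$, which, weighed against the polynomial growth of the Weyl dimension, is summable as soon as $\sum q_i/m<k-1$. The Lie algebra case is analogous, with the growth of highest weights replacing that of $d_\pi$.

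The main obstacle is the borderline equality case $\sum q_i/m=k-1$, where the crude character bound is sharp and does not by itself yield summability. Here I expect to exploit the combinatorial structure identified in the proof of Theorem \ref{T:suopens}: the multiplicity hypothesis forces the $x_i$ to have stabilizers of complementary shapes, and I would try to extract additional decay either (i) by a refined estimate on $|\chi_\pi(x_i)|/d_\pi$ that uses the block structure of the stabilizers beyond their dimension, showing that the set of $\pi$ saturating the bound is a thin subvariety of the dominant weight lattice, or (ii) by a direct geometric analysis of the orbit-sum map, using the coarea formula and the fiber estimates developed for Theorem \ref{T:suopens} to control the density pointwise. The exclusion of the exceptional two-matrix configuration in Theorem \ref{T:suopens} is precisely what prevents a resonance that would otherwise defeat both approaches, so it should appear naturally at this step.
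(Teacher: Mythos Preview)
Your outline takes a different route from the paper and leaves a genuine gap at the step you yourself flag.

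The paper does not redo any Fourier analysis. It quotes Theorem~\ref{T:L2} of Gupta--Hare--Seyfaddini as a black box: if, for every proper $\bR$--closed root subsystem $\Psi$,
\[
\sum_{i=1}^k \max_{\sigma\in\W}|N_i\cap\sigma N_\Psi|\ \geq\ |N_\Psi|+\corank(\Psi),
\]
then the convolution is in $L^2$. The entire proof of Theorem~\ref{T:dich} is then the combinatorial verification (Lemma~\ref{L:L2count}) that every tuple declared open by Theorem~\ref{T:su_opens} satisfies this inequality. That verification is done with a strengthened counting lemma (Lemma~\ref{L:eclasses2}), which improves Lemma~\ref{L:eclasses1} by an extra error term $E$ that is strictly positive unless $\Phi_i$ has the balanced form $(A_{r_i})^{(n+1)/(r_i+1)}$. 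The $\corank(\Psi)$ slack in Theorem~\ref{T:L2} is recovered from the claim $|N_\Psi|/(n+1)>\corank(\Psi)$ together with these $E_i$; at the borderline $\sum(r_i+1)=(k-1)(n+1)$ one shows that not all $E_i$ can vanish unless the tuple is the excluded $(A_r\times A_r,A_r\times A_r)$ in $A_{2r+1}$.

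Your proposal instead attempts the Fourier estimate from scratch. The crude bound $|\chi_\pi(x)|/d_\pi\lesssim d_\pi^{q(x)/m-1}$ is too weak: it yields a summand of order $d_\pi^{2(\sum q_i/m-(k-1))}$, which at the borderline $\sum q_i/m=k-1$ gives $\sum_\pi 1=\infty$. The borderline is not a thin exceptional set; already for two regular elements in $SU(2)$ one has $q_1=q_2=1$, $m=2$, so $\sum q_i/m=1=k-1$, and your bound gives nothing. Your suggestions (i) and (ii) for repairing this are not carried out. Suggestion (i) --- a refined character estimate using the full root-subsystem structure of the stabilizer rather than just its dimension --- is exactly what GHS proved and packaged as Theorem~\ref{T:L2}; if you pursue it you will end up reproving that theorem and then still need the combinatorial check the paper performs. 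Suggestion (ii), a direct coarea argument, is not developed in the paper and would be a separate project.

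In short: the missing idea is that the $L^2$ criterion one needs is not a single dimension-type exponent but the root-subsystem inequality of Theorem~\ref{T:L2} with its $\corank(\Psi)$ correction, and the borderline case is handled by the extra $E$ term in Lemma~\ref{L:eclasses2}, which vanishes precisely in the excluded configuration.
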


In the case of $\su(m)$ and $SU(m)$ this generalizes a result of Gupta, Hare and Seyfaddini \cite{GH3, GHS}, who have proven $L^2$--singular dichotomy for convolution powers of a single orbital measure in a classical compact Lie group. In so doing, again in the four classical casses, they have found the minimal $k$ (depending on the orbit in question) such that a sum of $k$ copies of a single adjoint orbit, or a product of $k$ copies of a single conjugacy class, contains an open set. For consistency with this literature, we speak of adjoint orbits instead of co-adjoint orbits. As the group is compact, there is no difference.  

The following facts will not be used, but provide insight and context. There is a relationship between sums of adjoint orbits and products of conjugacy classes: $\prod_{i=1}^k C_{\exp(X_i)}\subset \exp(\sum_{i=1}^k O_{X_i})$, with equality if all $X_i$ are sufficiently small \cite{DW}. A sum of adjoint orbits is the image of a moment map, so it follows by convexity properties of moment maps of Guillemin-Sternberg and Kirwan \cite{GS1, GS2, K} that the intersection of a sum of adjoint orbits with a positive Weyl chamber of a maximal torus is a convex polytope. In the case of $\su(m)$, a sum of two adjoint orbits can be described by a system of explicit linear inequalities on the eigenvalues, as recently proven by Knutson and Tao in their solution to Horn's problem \cite{KT}. Again in the case of $\su(m)$, the Radon-Nikodym derivative of the convolution of two invariant measures on adjoint orbits has been computed \cite{FG}. There is a related general formula for the convolution of two invariant measures on adjoint orbits in terms of the projection of such measures to maximal tori \cite{DRW}. Up to scaling, the convolution of invariant measures on adjoint orbits is the push-forward of Liouville measure under the moment map, and is hence a Duistermaat-Heckman measure. In this context, the Radon-Nikodym derivative might be studied by generalizing the Guillemin-Lerman-Sternberg formula to non-abelian groups. A conjugacy class in $G$ is not a symplectic manifold, but nonetheless the product of conjugacy classes is the image of a Lie group valued moment map \cite{AMM}. 

Our general results are based in root theory. The root space decomposition of the complexification $\Fg_\bC$ gives that, for some root system $\Phi$, $$\Fg_\bC=\Ft_\bC\oplus\bigoplus_{\alpha\in\Phi}\Fg_\alpha.$$ If $\Fg$ is simple, then the root system $\Phi$ is irreducible, and hence must be of classical type $A_n, B_n, C_n$ or $D_n$ or of exceptional type $E_6, E_7, E_8, F_4$ or $G_2$. Types $A_n, B_n, C_n$ and $D_n$ correspond to the real compact Lie algebras $\su(n+1), \so(2n+1), \sp(n, \bC)$ and $\so(2n)$ respectively.

Every $X\in\Fg$ lies in some maximal torus $\Ft$. Our convention will be that roots are real-valued on $\Ft$ and defined modulo $2\pi \bZ$ on the corresponding maximal torus $T$ of $G$. We define the \textit{annihilating root subsystem} of $X$ as $\Phi_X=\{\alpha\in\Phi:\alpha(X)=0\}$. We similarly define $\Phi_x=\{\alpha\in\Phi:\alpha(X)\in 2\pi \bZ \}$ for $x\in T$. The annihilating root subsystems may depend on the choice of maximal torus, but are well defined up to the action of the Weyl group $\W$. 

We say that a root subsystem $\Psi$ of $\Phi$  is $\bR$--closed if $\Psi=\spa_{\bR}(\Psi)\cap\Phi$. When considering an adjoint orbit or conjugacy class, we will only use the information of the annihilating root system, and there will not be much difference between the group (conjugacy class) case and the algebra (adjoint orbit) case. However, in general, if $X\in \Fg$ and $x\in G$, then $\Phi_X$ is $\bR$--closed, but $\Phi_x$ need not be, so there are more possible annihilating subsystems for the group. All root subsystems of a root system of type $A_n$ are $\bR$--closed, so for the special unitary group the group and algebra problems are essential identical.   

We define the rank of $\Psi$ as $\rank(\Psi)=\dim(\spa_{\bR}\Psi)$, and the co-rank as $\corank(\Psi)=\rank(\Phi)-\rank(\Psi)$. For convenience we define $N_\Psi=\Phi\setminus\Psi$ and $N_{X}=\Phi\setminus\Phi_X$. 

To prove a sum does not contain an open set we use a transversality argument.  To prove a sum of adjoint orbits contains an open set we compute the rank of certain map, which essentially allows us to estimate the dimension of the set of critical points of the addition map $\prod_{i=1}^k O_{X_i}\to \Fg$. In terms of symplectic geometry (which we will not rely on), our starting point for everything is that the image of the moment map contains an open set if and only if the principal stabilizer of the diagonal action is finite. This corresponds to our Theorem \ref{P:main}, for which we give an elementary proof. The following are our two general results. Generalizations are likely possible to the moment map for the diagonal action on a product of symplectic manifolds with Hamiltonian $G$ actions, each of whose principal stabilizers is explicitly known. Since such generalizations are not relevant to our task, we will not comment further.   
 
\begin{thm}\label{T:open}
Suppose $X_1, \ldots X_k\in \Fg$, and write $N_i=N_{X_i}$ (or suppose $x_1,\ldots, x_k\in G$,  and $N_i=N_{x_i}$). Suppose
 $$\sum_{i=1}^k \min_{\sigma\in\W}|N_{i}\cap\sigma N_\Psi|\geq|N_\Psi| +1$$ for all $\bR$--closed co-rank one root subsystems $\Psi$ of $\Phi$. Then $\sum_{i=1}^k O_{X_i}$ (respectively $\prod_{i=1}^k C_{x_i}$) contains an open set.
 \end{thm}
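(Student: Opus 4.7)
The plan is to identify $\sum_i O_{X_i}$ as the image of the addition map $\mu\colon \prod_i O_{X_i}\to \Fg$, $(Y_1,\ldots,Y_k)\mapsto \sum_i Y_i$, and to show that the hypothesis forces $\mu$ to be a submersion somewhere; the implicit function theorem then gives an open set in the image. Fixing a bi-invariant inner product on $\Fg$, the tangent space $T_{Y_i}O_{X_i}=[\Fg,Y_i]$ has orthogonal complement equal to the centralizer $\Fg^{Y_i}$, so $d\mu$ is surjective at $(Y_1,\ldots,Y_k)$ if and only if the joint centralizer $\bigcap_i \Fg^{Y_i}$ is zero. The task reduces to showing that the ``bad locus''
\[ B := \bigl\{(Y_1,\ldots,Y_k) \in \textstyle\prod_i O_{X_i}\colon \bigcap_i \Fg^{Y_i} \neq 0\bigr\} \]
is a proper subset of $\prod_i O_{X_i}$.

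First, I would stratify $B$ by $\bR$-closed corank-one root subsystems. Given $(Y_i)\in B$ and a nonzero $Z$ in the joint centralizer, a diagonal $G$-conjugation puts $Z\in\Ft$, so $\Phi_Z$ is automatically $\bR$-closed of corank $\geq 1$. Any hyperplane in $\Ft^*$ containing $\spa_{\bR}\Phi_Z$ meets $\Phi$ in an $\bR$-closed corank-one $\Psi\supseteq \Phi_Z$, and then $Y_i \in \Fg^Z \subseteq \Fg_\Psi := \Ft\oplus\bigoplus_{\alpha\in\Psi}\Fg_\alpha$ for every $i$. Hence $B\subseteq \bigcup_\Psi B_\Psi$, a finite union over $\W$-classes of $\bR$-closed corank-one $\Psi$, where $B_\Psi := G\cdot\prod_i (O_{X_i}\cap \Fg_\Psi)$ under diagonal conjugation.

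Second, I would bound $\dim B_\Psi$. Any $Y\in O_{X_i}\cap \Fg_\Psi$ lies in a Cartan subalgebra of $\Fg_\Psi$; since all such are $G_\Psi$-conjugate (where $G_\Psi$ is the connected subgroup with Lie algebra $\Fg_\Psi$), I may take $Y\in\Ft$, giving $Y=\sigma X_i$ for some $\sigma\in\W$. Thus $O_{X_i}\cap \Fg_\Psi$ is a finite union of $G_\Psi$-orbits, the one through $\sigma X_i$ of dimension
\[ \dim G_\Psi - \dim(G_\Psi\cap G^{\sigma X_i}) = (\rank \Fg + |\Psi|) - (\rank \Fg + |\Psi\cap \sigma\Phi_{X_i}|) = |\Psi| - |\Psi\cap \sigma\Phi_{X_i}|. \]
Maximizing over $\sigma$ gives $\dim(O_{X_i}\cap \Fg_\Psi) = |\Psi| - \min_{\sigma\in\W} |\Psi\cap \sigma\Phi_{X_i}|$. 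A tangent-space computation shows the generic fiber of the action map $G\times \prod_i(O_{X_i}\cap\Fg_\Psi)\to B_\Psi$ has dimension $\dim \Fg_\Psi = \rank\Fg + |\Psi|$, so
\[ \dim B_\Psi = |N_\Psi| + \sum_i \bigl(|\Psi| - \min_{\sigma\in\W}|\Psi\cap \sigma\Phi_{X_i}|\bigr). \]

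Third, I would translate the hypothesis. Inclusion--exclusion gives $|N_i\cap\sigma N_\Psi|=|N_i|-|\Psi|+|\Phi_{X_i}\cap\sigma\Psi|$, and Weyl-invariance identifies $\min_\sigma|\Phi_{X_i}\cap\sigma\Psi|$ with $\min_\sigma|\Psi\cap\sigma\Phi_{X_i}|$. Substituting, the assumption $\sum_i \min_\sigma|N_i\cap \sigma N_\Psi|\geq |N_\Psi|+1$ rearranges exactly to
\[ \dim \textstyle\prod_i O_{X_i} - \dim B_\Psi \geq 1 \]
for every $\bR$-closed corank-one $\Psi$. A finite union of lower-dimensional subsets cannot cover $\prod_i O_{X_i}$, so the complement of $B$ is nonempty; at any point there, $\mu$ is a submersion and $\sum_i O_{X_i}$ contains an open set. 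The group case is parallel: $\Fg^{x_i}=\Ft\oplus\bigoplus_{\alpha\in\Phi_{x_i}}\Fg_\alpha$ (a subalgebra even when $\Phi_{x_i}$ is not $\bR$-closed) plays the same role, and the obstructing $Z$ still lies in some torus and produces an $\bR$-closed corank-one $\Psi$ exactly as above.

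The main obstacle is the dimension bookkeeping: correctly decomposing $O_{X_i}\cap \Fg_\Psi$ into $G_\Psi$-orbits labelled by Weyl representatives, identifying the generic isotropy of the action map, and then checking that the combinatorial translation between $|N_i\cap\sigma N_\Psi|$ and $\dim B_\Psi$ makes the hypothesis precisely the inequality needed. Once those bookkeeping steps are in place, the argument is a dimension count plus the implicit function theorem.
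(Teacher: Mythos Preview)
Your argument is correct and arrives at the same dimension inequality as the paper, but the route is different. The paper parametrises the bad locus from the side of the obstruction: for each maximally singular $Z$ it studies the map $f_Z\colon O_Z\times G^k\to\Fg^k$, $f_Z(Z',g_1,\ldots,g_k)=([Z',\Ad(g_i)X_i])_i$, bounds $\rank Df_Z$ on $f_Z^{-1}(0)$ from below by $\sum_i\min_\sigma|N_\Psi\cap\sigma N_i|$ via a root-space computation, and then uses the hypothesis $\rank Df_Z>\dim O_Z$ to conclude that the projection of $f_Z^{-1}(0)$ to $G^k$ has measure zero. You instead parametrise the bad stratum from the side of the points: $B_\Psi=G\cdot\prod_i(O_{X_i}\cap\Fg_\Psi)$, decompose each $O_{X_i}\cap\Fg_\Psi$ into finitely many $G_\Psi$--orbits through Weyl translates of $X_i$, and bound $\dim B_\Psi$ directly. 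The two computations are dual: your orbit dimension $|\Psi|-\min_\sigma|\Psi\cap\sigma\Phi_{X_i}|$ and the paper's rank contribution $\min_\sigma|N_\Psi\cap\sigma N_i|$ are related by exactly the inclusion--exclusion you write down, and both feed into the same inequality $\dim B_\Psi<\dim\prod_iO_{X_i}$.

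One small point to tighten: you assert that the generic fibre of $G\times\prod_i(O_{X_i}\cap\Fg_\Psi)\to B_\Psi$ has dimension \emph{equal to} $\dim\Fg_\Psi$. What is immediate, and all you need, is that every fibre has dimension \emph{at least} $\dim G_\Psi$, since $G_\Psi$ acts freely on the source (on the $G$ factor) and the map is $G_\Psi$--invariant; this already gives the upper bound on $\dim B_\Psi$. Equality would require $\Fg_\Psi^\perp\cap\bigcap_i\Fg^{W_i}=0$ at a generic point, which is not needed and not obviously true in all cases. With ``$=$'' replaced by ``$\leq$'' in your displayed formula for $\dim B_\Psi$, the argument is complete. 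Your packaging is arguably more geometric (one sees the bad locus directly as an orbit of a smaller space), while the paper's rank computation is a clean local calculation that sidesteps the orbit decomposition of $O_{X_i}\cap\Fg_\Psi$.
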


\begin{thm}\label{T:sing}
Suppose $X_1, \ldots X_k\in \Fg$, and write $N_i=N_{X_i}$ (or suppose $x_1,\ldots, x_k\in G$,  and $N_i=N_{x_i}$). Suppose that $\sigma_i(N_{i}) \cap N_{\Psi}$, $i=1, \ldots, k$, are disjoint for some $\bR$--closed co-rank one root subsystem $\Psi$ of $\Phi$ and some $\sigma_1, \ldots, \sigma_k\in \W$. Then $\sum_{i=1}^k O_{X_i}$ (respectively $\prod_{i=1}^k C_{x_i}$) does not contain an open set. 
\end{thm}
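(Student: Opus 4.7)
The strategy is to use Theorem \ref{P:main} to reduce to showing that the common centralizer $\bigcap_i \Fg^{Y_i}$ is nonzero for every $(Y_1,\ldots,Y_k) \in \prod_i O_{X_i}$. Under an $\Ad$-invariant inner product on $\Fg$, the image of $d\phi$ at $(Y_1,\ldots,Y_k)$ for the addition map $\phi(Y_1,\ldots,Y_k) = \sum_i Y_i$ is $\sum_i [\Fg, Y_i] = (\bigcap_i \Fg^{Y_i})^\perp$. So if the common centralizer is always nonzero, $\phi$ is nowhere a submersion, and by Sard's theorem the image $\sum_i O_{X_i}$ has measure zero in $\Fg$ and contains no open set.

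By replacing each $X_i$ with $\sigma_i X_i$ (same adjoint orbit, with $N$-set $\sigma_i N_i$), we may assume without loss of generality that $\sigma_i = e$, so that $N_{X_i} \cap N_\Psi$ are pairwise disjoint subsets of $N_\Psi$. Using $\bR$-closedness and corank one of $\Psi$, choose a nonzero $H \in \Ft$ with $\alpha(H) = 0$ if and only if $\alpha \in \Psi$; this gives the orthogonal decomposition $\Fg = \Fg^H \oplus \mathfrak{m}$, where $\Fg^H = \Ft \oplus \bigoplus_{\alpha \in \Psi} \Fg_\alpha$ and $\mathfrak{m} = \bigoplus_{\alpha \in N_\Psi} \Fg_\alpha$.

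At the base point $(X_1,\ldots,X_k)$, the sum $\sum_i [\Fg, X_i] = \bigoplus_{\alpha \in \bigcup_i N_{X_i}} \Fg_\alpha$ misses all of the Cartan $\Ft$ (since $[\Ft, X_i] = 0$) and, by the disjointness hypothesis, its $\mathfrak{m}$-component has dimension $\sum_i |N_{X_i} \cap N_\Psi| \leq |N_\Psi|$; so the sum is a proper subspace of $\Fg$ at this point. The main difficulty will be extending this rank deficiency to every $(Y_1,\ldots,Y_k)$: the decomposition $\Fg = \Fg^H \oplus \mathfrak{m}$ is only $G^H$-equivariant, so the rotated subspaces $\Ad(g_i)[\Fg, X_i]$ mix across the split, and a naive $\mathfrak{m}$-projection argument fails. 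The resolution will leverage the combinatorial disjointness together with the specific Levi structure of each $\Fg^{X_i}$ to exhibit, for every $(g_1,\ldots,g_k) \in G^k$, a nonzero element of $\bigcap_i \Ad(g_i) \Fg^{X_i}$; this construction, which must remain valid as the $g_i$ range freely over $G$ rather than just stabilize the grading by $H$, is the crux of the argument. The group-theoretic case, i.e., the analogous statement for $\prod_i C_{x_i} \subseteq G$, follows from an analogous argument applied to the multiplication map.
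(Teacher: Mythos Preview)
Your setup is correct through the base-point computation, but the proposal has a genuine gap at exactly the point you yourself flag as ``the crux of the argument'': you promise a construction that exhibits a nonzero element of $\bigcap_i \Ad(g_i)\Fn_{X_i}$ for \emph{every} $(g_1,\ldots,g_k)\in G^k$, yet you do not give it. Invoking ``the specific Levi structure of each $\Fg^{X_i}$'' is not a proof, and it is not clear that any direct algebraic construction along those lines works once the $g_i$ fail to normalize the grading by $H$. As written, everything after the base-point verification is a statement of intent rather than an argument.

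The paper avoids this difficulty entirely. It does \emph{not} try to produce a centralizing element for arbitrary $(g_1,\ldots,g_k)$. Instead, with $Z$ chosen so that $\Phi_Z=\Psi$, it considers the submanifolds $M_i=G_{X_i}\cdot Z\subset O_Z$, each contained in $\Fn_{X_i}$. The tangent space to $M_i$ at $Z$ is $(\Fn_{X_i}\cap\Fn_Z^\perp)$, and the disjointness hypothesis on the $N_{X_i}\cap N_\Psi$ says precisely that these tangent spaces have linearly independent orthocomplements in $T_Z O_Z$, i.e.\ the $M_i$ meet \emph{totally transversely} at $Z$. Transversality then guarantees that $\bigcap_i \Ad(g_i)M_i\neq\varnothing$ for all $(g_1,\ldots,g_k)$ sufficiently close to the identity, hence $\bigcap_i \Ad(g_i)\Fn_{X_i}\neq\{0\}$ on an open neighbourhood of the identity in $G^k$. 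Now the ``Furthermore'' clause of Proposition~\ref{P:main} finishes the job: if the sum contained an open set, this intersection would vanish off a proper subvariety of $G^k$, which cannot happen if it is nonzero on an open set. You overlooked this clause, which is why you thought you needed the intersection to be nonzero for all $(g_1,\ldots,g_k)$; in fact an open neighbourhood suffices, and that is what transversality delivers.
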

 
We will use the following theorem of Gupta, Hare, and Seyfaddini \cite{GHS} only to prove the $L^2$--singular dichotomy. The group case is proven by estimating the size of characters of $G$ by using l'H\^{o}pital's rule to evaluate the Weyl character formula, and applying this estimate to the operator-valued Fourier transform of a convolution of measures. Then the result can be transferred from the group to the algebra using the wrapping map of Dooley and Wildberger. 
 
\begin{thm}\label{T:L2}
Suppose $X_1, \ldots X_k\in \Fg$, and write $N_i=N_{X_i}$ (or suppose $x_1,\ldots, x_k\in G$,  and $N_i=N_{x_i}$). Let $\mu_i$ be the invariant probability measure supported on $O_{X_i}$ (respectively $C_{x_i}$). Suppose that
 $$\sum_{i=1}^k \max_{\sigma\in\W}|N_i\cap\sigma N_\Psi|\geq |N_\Psi|+\corank(\Psi)$$ for all proper $\bR$--closed root subsystems $\Psi$ of $\Phi$. Then $\mu_1*\cdots *\mu_k\in L^2$.
 \end{thm}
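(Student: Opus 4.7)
The plan is to prove the group version first via Fourier analysis on $G$ and then transfer to the algebra version using the Dooley-Wildberger wrapping map. For the group case, observe first that for each conjugacy class measure $\mu_i$ and each irreducible representation $\pi_\lambda$ of $G$, the operator-valued Fourier transform $\hat{\mu}_i(\pi_\lambda) = \int_G \pi_\lambda(g)\,d\mu_i(g)$ is $\pi_\lambda$-intertwining by $G$-invariance of $\mu_i$; Schur's lemma identifies it with $(\chi_\lambda(x_i)/d_\lambda)\,\mathrm{Id}$, where $d_\lambda = \dim\pi_\lambda$. Plancherel on $G$ then reduces the theorem to establishing convergence of
$$\sum_\lambda d_\lambda^{2-2k}\prod_{i=1}^k|\chi_\lambda(x_i)|^2,$$
the sum running over dominant integral weights.

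The key analytic input is a character bound, obtained from the Weyl character and Weyl dimension formulas by applying l'H\^opital's rule in the directions spanned by the roots that vanish on $x_i$. Writing $N_i^+=N_i\cap\Phi^+$, one gets
$$\Bigl|\frac{\chi_\lambda(x_i)}{d_\lambda}\Bigr|\lesssim \prod_{\alpha\in N_i^+}\frac{1}{|\langle\lambda+\rho,\alpha\rangle|},$$
and moreover one may replace $N_i^+$ by $\sigma N_i^+$ for any Weyl element $\sigma$, since $\chi_\lambda$ is a class function. Now stratify the dominant weights by the $\bR$-closed root subsystem $\Psi$ they lie closest to, meaning that $|\langle\lambda+\rho,\alpha\rangle|$ is bounded for $\alpha\in\Psi$ and of order $|\lambda|$ for $\alpha\in N_\Psi$. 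On each such stratum the lattice of weights at scale $R$ has density of order $R^{\corank\Psi-1}$, and $d_\lambda^2\sim R^{|N_\Psi|}$. Choosing each $\sigma_i$ to optimize the character bound's decay amounts to maximizing $|N_i\cap\sigma N_\Psi|$; the stratum's contribution to the Plancherel sum then reduces, up to bounded factors, to an integral of the form
$$\int R^{\,|N_\Psi|+\corank\Psi-1-\sum_i\max_\sigma|N_i\cap\sigma N_\Psi|}\,dR,$$
which is finite precisely under the hypothesis, the boundary case being absorbed by the $\rho$-shift. Summing over the finitely many strata handles the group case.

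For the algebra case, the Dooley-Wildberger wrapping map transports $G$-invariant compactly supported distributions on $\Fg$ to $G$-invariant distributions on $G$, intertwines Euclidean convolution on $\Fg$ with group convolution on $G$, and sends orbital measures to conjugacy class measures up to a smooth nonvanishing Jacobian. After rescaling the $X_i$ so that their combined support lies in a neighborhood on which $\exp$ is a diffeomorphism (permissible because the hypothesis is scale invariant), an $L^2$ bound for the group convolution transfers directly to one for the algebra convolution.

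The main obstacle is the character estimate: executing l'H\^opital's rule carefully along the annihilating roots to obtain the stated product form, and verifying that the optimal Weyl representative realizes exactly the combinatorial quantity $\max_\sigma|N_i\cap\sigma N_\Psi|$ appearing in the hypothesis. Once this bookkeeping is in place, the stratum-by-stratum summation is a routine lattice count.
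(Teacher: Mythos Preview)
The paper does not actually prove this theorem: it is quoted as a result of Gupta, Hare, and Seyfaddini \cite{GHS}, and the paper only uses it (in Section~\ref{S:su_di}) as a black box to deduce the $L^2$--singular dichotomy. What the paper does provide is a two-sentence summary of the GHS argument, and your sketch matches that summary precisely: estimate characters via l'H\^{o}pital applied to the Weyl character formula, feed this into the Plancherel formula for the operator-valued Fourier transform of the convolution, and then transfer from $G$ to $\Fg$ via the Dooley--Wildberger wrapping map.

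As a sketch your outline is sound and correctly identifies the combinatorial quantity $\max_\sigma |N_i\cap\sigma N_\Psi|$ as the exponent governing decay on each stratum, which is exactly where the hypothesis enters. But be aware that what you have written is a plausibility argument rather than a proof: the character bound you state, the stratification of dominant weights by ``nearest'' $\bR$--closed subsystem, and the lattice count on each stratum all require genuine work (and some care with constants and boundary strata) that is carried out in \cite{GHS} but not here. If you intend this as a replacement for the citation, those details must be filled in; if you intend it as an expository gloss on the cited result, it is accurate and somewhat more detailed than what the paper itself offers.
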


The organization of the paper is as follows. In Section \ref{S:mains} we prove our two general results, Theorems \ref{T:open} and \ref{T:sing}. In Section \ref{S:su_opens} we use these theorems to determine which sums of adjoint orbits in $\su(m)$ contain an open set, giving Theorem {T:suopens}. In Section \ref{S:su_di} we prove our $L^2$--singular dichotomy, Theorem \ref{T:dich}.

It would be desirable to have analogues of the $SU(m)$ results for the other classical compact Lie algebras and groups. Our singularity result, Theorem \ref{T:sing}, does not seem to be strong enough for this task. The author is not aware of any case where Theorem \ref{T:open} is not sharp. 

\emph{Acknowledgements.} This research was supported in part by NSERC. The author thanks Kathryn Hare for many useful conversations and much encouragement, Yael Karshon for sharing her expertise on symplectic geometry, and Matthew Strom Borman for giving helpful comments on a draft of this paper.


\section{Combinatorial root conditions}\label{S:mains}

Throughout this paper, $G$ will denote a real compact connected simple Lie group with Lie algebra $\Fg$ and root system $\Phi$. We will fix a maximal torus $\Ft$ throughout the paper and assume that $X_1,\ldots, X_k\in \Ft$, and $x_1, \ldots, x_k$ are contained in $T=\exp(\Ft)$.

The tangent space to $O_{X}$ at $\Ad(g)X$ is $\im\ad(\Ad(g)X)$.  Since $G$ is compact, we can endow $\Fg$ with an $\Ad$--invariant inner product $(\cdot,\cdot)$. With respect to this inner product, $\ad(X)$ is skew symmetric, so we have $\im\ad(X)=(\ker\ad(X))^\perp$. We define $\Fn_X=\ker\ad(X)$ to be the null space of $\ad(X)$.

Since $G$ is compact, it is also a real linear algebraic group. By subvarieties of $G$, we will mean affine algebraic subvarieties. In the argument below, it suffices to know that these are in particular real analytic subvarieties.

Our starting point for studying sums of adjoint orbits is the following proposition:

\begin{prop}\label{P:main}
The sum  $\sum_{i=1}^k O_{X_i}$ contains an open set if and only if there exist $(g_1,\ldots,g_k)\in G^k$ so that $$\bigcap_{i=1}^k \Ad(g_i)\Fn_{X_i}=\{0\}.$$ Furthermore, if $\sum_{i=1}^k O_{X_i}$ contains an open set, then this intersection is $\{0\}$ for all $(g_1,\ldots,g_k)$ off a proper subvariety of $G^k$. 
\end{prop}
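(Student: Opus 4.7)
The plan is to analyze the addition map $F : G^k \to \Fg$ defined by $F(g_1,\ldots,g_k) = \sum_{i=1}^k \Ad(g_i)X_i$, whose image is precisely $\sum_{i=1}^k O_{X_i}$. Everything will reduce to identifying the locus where $dF$ is surjective.

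First I would compute the differential. Identifying $T_{(g_1,\ldots,g_k)}G^k$ with $\Fg^k$ via left translation and differentiating along the curve $(g_1\exp(tY_1),\ldots,g_k\exp(tY_k))$, one obtains
$$dF_{(g_1,\ldots,g_k)}(Y_1,\ldots,Y_k) = \sum_{i=1}^k \Ad(g_i)[Y_i,X_i] = -\sum_{i=1}^k \Ad(g_i)\ad(X_i)Y_i.$$
Hence the image of $dF_{(g_1,\ldots,g_k)}$ is $\sum_i \Ad(g_i)\im\ad(X_i)$. Using the $\Ad$-invariant inner product to write $\im\ad(X_i) = \Fn_{X_i}^\perp$, this image equals
$$\sum_{i=1}^k \bigl(\Ad(g_i)\Fn_{X_i}\bigr)^\perp = \Bigl(\bigcap_{i=1}^k \Ad(g_i)\Fn_{X_i}\Bigr)^\perp,$$
so $dF$ is surjective at $(g_1,\ldots,g_k)$ if and only if the intersection in the statement is trivial.

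The $(\Leftarrow)$ direction is then the implicit function theorem: a surjective differential at a point implies $F$ is locally open there, so the image contains a neighborhood of $F(g_1,\ldots,g_k)$. For the $(\Rightarrow)$ direction I would invoke Sard's theorem: if $\sum_i O_{X_i}$ has nonempty interior, then its interior — a set of positive measure — must meet the set of regular values of $F$, since by Sard the critical values have measure zero. Any preimage of such a regular value gives a tuple $(g_1,\ldots,g_k)$ at which $dF$ is surjective, i.e.\ at which the intersection is $\{0\}$.

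For the "furthermore" clause, I would observe that since $G$ is a real linear algebraic group and $\Ad$ is polynomial in matrix entries, the condition $\bigcap_i \Ad(g_i)\Fn_{X_i} \neq \{0\}$ is equivalent to the rank of the linear map $\Fg^k \to \Fg$, $(Z_1,\ldots,Z_k)\mapsto \sum_i \Ad(g_i)Z_i$ (restricted to $\bigoplus_i \Fn_{X_i}^\perp$, or equivalently by the rank of $dF$) being strictly less than $\dim\Fg$. This is the vanishing of the appropriate minors, hence cuts out an algebraic subvariety $V \subset G^k$. If $V$ were all of $G^k$, then $dF$ would nowhere be surjective, making every point of $F(G^k) = \sum_i O_{X_i}$ a critical value; but the critical values form a measure-zero set by Sard, contradicting the hypothesis that $\sum_i O_{X_i}$ contains an open set.

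The only mildly subtle step is verifying that the bad locus is genuinely a subvariety (rather than merely a real-analytic subset), so that "proper subvariety" is the correct conclusion; this is immediate once one notes that $\Ad$ is polynomial in the defining matrix coordinates of $G$ and the rank drop is cut out by vanishing of minors. Everything else is a direct application of the implicit function theorem and Sard's theorem.
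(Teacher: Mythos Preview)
Your proof is correct and follows essentially the same approach as the paper: compute the image of the differential of the addition map as $\bigl(\bigcap_i \Ad(g_i)\Fn_{X_i}\bigr)^\perp$ via $\im\ad(X_i)=\Fn_{X_i}^\perp$, then invoke Sard and the open mapping theorem for the equivalence, and use that the rank-drop locus is cut out by vanishing of maximal minors for the ``furthermore'' clause. The only cosmetic difference is that you take the domain to be $G^k$ while the paper uses $\prod_i O_{X_i}$, which makes no difference since the map factors through the latter with the same image of the differential.
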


\begin{proof}
We consider the addition map from $O_{X_1}\times\cdots\times O_{X_k}$ to $\Fg$. The image of its derivative at $(\Ad(g_1)X_1,\ldots,\Ad(g_k)X_k)$ is
\begin{eqnarray*}
\sum_{i=1}^k \im\ad(\Ad(g_i)X_i)
&=&
\sum_{i=1}^k \Ad(g_i)\im\ad(X_i)
\\&=&
\sum_{i=1}^k \Ad(g_i)\Fn_{X_i}^\perp
\\&=&
\left(\bigcap_{i=1}^k \Ad(g_i)\Fn_{X_i}\right)^\perp.
\end{eqnarray*}
Hence the derivative of the addition map has full rank at this point if and only if $\bigcap_{i=1}^k \Ad(g_i)\Fn_{X_i}=\{0\}$. The image of the addition map is $\sum_{i=1}^k O_k$. By Sard's theorem and the open mapping theorem it contains an open set if and only if the derivative of the addition map has full rank at some point.

If the addition map has full rank at some point, then some $\dim \Fg$ by $\dim \Fg$ minor of its derivative has non-zero determinant.  The zero set of this determinant is a proper subvariety of $G^k$.
\end{proof}

A very similar theorem holds for products of conjugacy classes. We define $\Fn_{x_i}=\ker (\Ad(x_i^{-1})-\id)$, and note that the tangent space to $C_{x_i}$ at $g_i$ is $(\Ad(g_i^{-1}) \Fn_{x_i})^\perp$. Note that $\Ad(g)\Fn_x=\Fn_{gxg^{-1}}$. 

\begin{prop}\label{P:main_conf}
The product  $\prod_{i=1}^k C_{x_i}$ contains an open set if and only if there exist $(g_1,\ldots,g_k)\in G^k$ so that $$\bigcap_{i=1}^k \Ad(g_i)\Fn_{x_i}=\{0\}.$$ Furthermore if $\prod_{i=1}^k C_{x_i}$ contains an open set then this intersection is $\{0\}$ for all $(g_1,\ldots,g_k)$ off a proper subvariety of $G^k$. 
\end{prop}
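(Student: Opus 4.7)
The plan is to mirror the proof of Proposition \ref{P:main}, replacing the linear addition map by the analytic map $\mu: G^k \to G$ given by $\mu(g_1, \ldots, g_k) = \prod_{i=1}^k g_i x_i g_i^{-1}$, whose image is exactly $\prod_{i=1}^k C_{x_i}$. Sard's theorem and the open mapping theorem again reduce the problem to showing that the image of $d\mu$ at some $(g_1,\ldots,g_k)$ is all of $\Fg$. The subvariety claim will then follow as before: if some $(\dim\Fg) \times (\dim\Fg)$ minor of the Jacobian is non-zero at one point, then its zero set is a proper subvariety of $G^k$ (noting that $\mu$ is a regular map on the real algebraic group $G^k$).

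The heart of the proof is the computation of the image of $d\mu$. First I would observe that at $y = gxg^{-1} \in C_x$, the tangent space $T_y C_x$, right-translated to $\Fg$ by $y^{-1}$, equals $\im(\id - \Ad(y)) = \Fn_y^\perp = (\Ad(g)\Fn_x)^\perp$, using the orthogonality of $\Ad(y)$ with respect to the invariant inner product. Then I would apply the Leibniz rule to $\mu$: writing $y_i = g_i x_i g_i^{-1}$ and $y = y_1 \cdots y_k$, the $i$th term of $d\mu(Y_1,\ldots,Y_k)$ has the form $y_1 \cdots y_{i-1} \cdot w_i \cdot y_{i+1} \cdots y_k$ with $w_i \in T_{y_i} C_{x_i}$. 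Right-translating by $y^{-1}$, the trailing factors $y_{i+1}\cdots y_k$ cancel, and what survives is $\Ad(y_1 \cdots y_{i-1})$ applied to the right-translation of $w_i$. Summing over $i$ and using the orthogonality of $\Ad$, the image of $d\mu$ (translated to $\Fg$) equals
\[
\sum_{i=1}^k (\Ad(h_i)\Fn_{x_i})^\perp \;=\; \left(\bigcap_{i=1}^k \Ad(h_i)\Fn_{x_i}\right)^\perp,
\]
where $h_i := y_1 \cdots y_{i-1} g_i$.

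The final step is to note that $(g_1, \ldots, g_k) \mapsto (h_1, \ldots, h_k)$ is a diffeomorphism of $G^k$: one recovers $g_1 = h_1$, then $y_1 = g_1 x_1 g_1^{-1}$ and $g_2 = y_1^{-1} h_2$, and so on recursively. Thus the existence of $(g_1,\ldots,g_k)$ at which $d\mu$ has full rank is equivalent to the existence of $(h_1,\ldots,h_k) \in G^k$ with $\bigcap_i \Ad(h_i) \Fn_{x_i} = \{0\}$, which after relabeling the $h_i$ as $g_i$ is precisely the condition in the statement.

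The main obstacle is this derivative computation, which is more delicate than the algebra case because noncommutativity of group multiplication produces $\Ad$--twists by the partial products $y_1 \cdots y_{i-1}$ that have no analogue for Lie algebras. The cleanest way to manage these is the two-step scheme above---first right-translate each $T_{y_i} C_{x_i}$ to $\Fg$, and only then absorb the leading factors as $\Ad$--twists; once this bookkeeping is in place the rest of the argument (the linear algebra of intersections versus orthogonal sums, and the Sard/subvariety argument) is formally identical to Proposition \ref{P:main}.
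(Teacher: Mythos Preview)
Your argument is correct and is exactly the approach the paper intends: the paper leaves the proof to the reader with the hint that $\bigcap_i \Ad(g_i)\Fn_{x_i}$ is the orthogonal complement of the image of the derivative of the product map at a point \emph{different from} $(g_1,\ldots,g_k)$, and your reparametrization $h_i = y_1\cdots y_{i-1}g_i$ makes this precise. For the subvariety claim it is worth remarking (or using directly) that the change of variables $(g_i)\mapsto(h_i)$ is not just a diffeomorphism but a regular automorphism of $G^k$, so that the Zariski-closed locus transfers; alternatively, observe directly that $\bigcap_i \Ad(g_i)\Fn_{x_i}\neq\{0\}$ is the vanishing of the maximal minors of the map $Z\mapsto\big((\Ad(g_ix_i^{-1}g_i^{-1})-\id)Z\big)_i$, hence Zariski-closed in $(g_1,\ldots,g_k)$.
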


The proof of this proposition is left to the reader, who should be aware that the above intersection is the orthogonal complement of the image of the derivative (of the natural product map) at a point which is not $(g_1, \ldots, g_k)$.  

We wish to understand when $\cap_{i=1}^k \Ad(g_i)\Fn_{X_i}$, or $\cap_{i=1}^k \Ad(g_i)\Fn_{x_i}$, is trivial. The following lemma says that this happens exactly when the intersection contains a certain type of element, called \emph{maximally singular}. So it will suffice to study when this intersection contains this type of element. 

To motivate the next lemma, we mention that in $\su(m)$, the maximally singular elements will be those matrices with exactly two distinct eigenvalues. In $\su(m)$, if $g_1X_1g_1^{-1},\ldots, g_kX_kg_k^{-1}$ all commute with some $Z\in \su(m)$ (meaning $Z\in \cap_{i=1}^k \Ad(g_i)\Fn_{X_i}$), then $Z$ may be taken to have only two distinct eigenvalues. This can be proven using the fact that diagonalizable matrices commute if and only if they preserve each other's eigenspaces. 

We return now to the general situation. For a root subsystem $\Phi_0$ of $\Phi$, define $\Ft_{\Phi_0}=\{T\in\Ft|\alpha(T)=0, \forall\alpha\in\Phi_0\}$. For $Z\in \Ft$, define $\Ft_Z$ to be $\Ft_{\Phi_Z}$.  

For $X\in \Fg$ define $G_X=\{g\in G: \Ad(g)X=X\}$, and for $x\in G$ define $G_x=\{g\in G: gxg^{-1}=x\}$. Note that $G_X$ and $G_x$ have Lie algebras $\Fn_X$ and $\Fn_x$ respectively. Note also that $\Ft_X$ is the center of $\Fn_X$ and so $\Ft_X$ is contained in any maximal torus that contains $X$. 

We will call $Z\in \Fg$ \emph{maximally singular} if $\Phi_Z$ has co-rank one. 

\begin{lem}\label{L:maxbad}
Given $g_1,\ldots, g_k\in G$, if  the intersection $\cap_{i=1}^k \Ad(g_i)\Fn_{X_i}$ or $\cap_{i=1}^k \Ad(g_i)\Fn_{x_i}$ is not $\{0\}$, then it contains a maximally singular element. 
\end{lem}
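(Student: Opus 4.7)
My plan is to locate a maximally singular element inside a well-chosen abelian subalgebra of the intersection. Write $V$ for $\bigcap_{i=1}^k\Ad(g_i)\Fn_{X_i}$, and let the analogous argument with $\Fn_{x_i}$ handle the group case. Since $V$ is the Lie algebra of the compact subgroup $H=\bigcap_{i=1}^k g_iG_{X_i}g_i^{-1}$ (respectively $\bigcap_i g_iG_{x_i}g_i^{-1}$), I can pick a maximal abelian subalgebra $\Fa\subseteq V$, namely the Lie algebra of a maximal torus of $H$, and embed it in a maximal torus $\Ft'$ of $\Fg$. Let $\Psi=\{\alpha\in\Phi:\alpha|_\Fa=0\}$, with $\Phi$ taken with respect to $\Ft'$; this $\Psi$ is automatically $\bR$--closed.

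The key identity to establish is $\Fa=\Ft_\Psi$. The inclusion $\Fa\subseteq\Ft_\Psi$ is immediate from the definition of $\Psi$. For the reverse I prove the stronger statement $\Ft_\Psi\subseteq V$; since $\Ft_\Psi$ is abelian and $\Fa$ is maximal abelian in $V$, equality then follows. In the algebra case, a direct root space computation shows that the $\Fg$-centralizers of $\Fa$ and of $\Ft_\Psi$ coincide (both equal $\Ft'+\bigoplus_{\alpha\in\Psi}\Fg_\alpha$); since each $\Ad(g_i)X_i$ centralizes $\Fa$, it therefore centralizes $\Ft_\Psi$, giving the desired inclusion. In the group case I need $\Ad(g_ix_ig_i^{-1})$ to fix $\Ft_\Psi$ pointwise, for which I invoke the standard fact that the centralizer of a torus in a compact connected Lie group is connected. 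Then $C_G(\exp\Fa)$ and $C_G(\exp\Ft_\Psi)$ are connected groups with the same Lie algebra, hence equal, and the membership $g_ix_ig_i^{-1}\in C_G(\exp\Fa)$ upgrades to membership in $C_G(\exp\Ft_\Psi)$.

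Granting $\Fa=\Ft_\Psi$, a short case analysis on $\corank(\Psi)$ finishes the proof. Simplicity of $\Fg$ rules out $\Psi=\Phi$, since otherwise $\Fa\subseteq\Z(\Fg)=\{0\}$, contradicting $V\neq\{0\}$. If $\corank(\Psi)=1$, then $\Fa$ is a line and every nonzero $Z\in\Fa$ has $\Phi_Z=\Psi$, so is already maximally singular. If $\corank(\Psi)\geq 2$, I enlarge $\Psi$ to an $\bR$--closed co-rank one subsystem $\Psi'\supseteq\Psi$ by iteratively adjoining roots $\alpha_j\in\Phi$ whose restrictions to the running intersection $\Ft_\Psi\cap\bigcap_{l<j}\ker\alpha_l$ are nonzero; such an $\alpha_j$ exists at each stage because no nontrivial subspace of $\Ft'$ lies in every root hyperplane, again by simplicity. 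After $\corank(\Psi)-1$ steps the running intersection is a line $\Ft_{\Psi'}\subseteq\Ft_\Psi=\Fa\subseteq V$, and any nonzero element of it has $\Phi_Z\supseteq\Psi'$ of co-rank one, so is maximally singular and lies in $V$.

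The main obstacle is the identity $\Fa=\Ft_\Psi$, and specifically its group version: without the connectedness of centralizers of tori in $G$, the element $g_ix_ig_i^{-1}$ centralizes $\exp\Fa$ but might a priori lie outside the identity component of $C_G(\exp\Fa)$, so it would not obviously centralize the larger torus $\exp\Ft_\Psi$, and the crucial inclusion $\Ft_\Psi\subseteq V$ would fail. The remaining steps are essentially combinatorial manipulations with the root system.
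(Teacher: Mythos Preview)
Your proof is correct, but it is considerably more elaborate than the paper's. The paper does not introduce the maximal abelian subalgebra $\Fa$ of $V$ at all: it simply picks \emph{any} nonzero $Z$ in the intersection, observes that $Z$ and $\Ad(g_i)X_i$ (respectively $g_ix_ig_i^{-1}$) lie in a common maximal torus, and deduces directly that $\Ft_Z\subseteq\Ad(g_i)\Fn_{X_i}$ (respectively $\Ad(g_i)\Fn_{x_i}$) for every $i$. Then one only needs to locate a maximally singular element inside $\Ft_Z$, which is immediate by choosing any $\bR$--closed co-rank one $\Psi\supseteq\Phi_Z$ and taking a nonzero vector in $\Ft_\Psi\subseteq\Ft_Z$. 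Your identity $\Fa=\Ft_\Psi$ is a stronger structural statement than is needed; the paper replaces it with the one-line inclusion $\Ft_Z\subseteq V$, and never needs maximality of anything. For the group case, the paper phrases the key point as ``maximal tori are maximal abelian subgroups'' rather than ``centralizers of tori are connected,'' but these are equivalent facts, so your treatment and the paper's rest on the same input. Your iterative construction of $\Psi'$ is also fine but could be replaced by the bare assertion that any proper $\bR$--closed subsystem sits inside a co-rank one $\bR$--closed subsystem.
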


\begin{proof}
For the algebra case, take $Z \in \cap_{i=1}^k \Ad(g_i)\Fn_{X_i}$. Since $Z$ and $\Ad(g_i)X_i$ are simultaneously contained in a maximal torus, we see that $\Ft_{Z}\subset  \Ad(g_i)\Fn_{X_i}$ for all $i$. So it suffices to pick a maximally singular element in $\Ft_{Z}$. To do so, take any $\bR$--closed co-rank one system $\Psi$ containing $\Phi_Z$, and pick any non-zero element of $\Ft_\Psi\subset \Ft_Z$.

The group case is very similar. Given  $Z \in \cap_{i=1}^k \Ad(g_i)\Fn_{x_i}$, we have that $\exp(\bR\cdot Z)$ and $g_i x_i g_i^{-1}$ commute. Since maximal tori are in fact maximal abelian subgroups, we see that $\exp(\bR\cdot Z)$ and $g_i x_i g_i^{-1}$ are simultaneously contained in a maximal torus of $G$. Hence $\Ft_{Z}\subset\Ad(g_i)\Fn_{x_i}$ and we proceed as above. 
\end{proof}

There are only finitely many co-rank one $\bR$--closed root subsystems $\Psi$ of $\Phi$, so we can pick a finite set $\S$ of $Z\in \Ft$ such that $\Phi_Z$ runs over all co-rank one $\bR$--closed root subsystems. For $Z\in \S$, for the algebra case, the lemma motivates us to study the zeros of the function $f_Z: O_{Z}\times G^k\to \Fg^k$ defined by $$f_Z(Z', g_1,\ldots, g_k)=([Z', \Ad(g_1) X_1], \ldots, [Z', \Ad(g_k) X_k]).$$

For the group case we are motivated to study a similar function $g_Z:O_{Z}\times G^k\to \Fg^k$ defined by $$g_Z(Z', g_1,\ldots, g_k)=((\Ad(g_1x_1^{-1}g_1^{-1})-\id)Z', \ldots, (\Ad(g_kx_k^{-1}g_k^{-1})-\id)Z').$$

Note that $f_Z(Z', g_1, \ldots, g_k)=0$ if and only if $Z'\in \cap_{i=1}^k \Ad(g_i)\Fn_{X_i}$ and $g_Z(Z', g_1, \ldots, g_k)=0$ if and only if $Z'\in \cap_{i=1}^k \Ad(g_i)\Fn_{x_i}$.

In order to show a sum of adjoint orbits contains an open set, we will argue, in the proof of Theorem \ref{T:open}, that the zero set of $f_Z$ has high co-dimension (for each possible type of $Z$), giving that $\cap_{i=1}^k \Ad(g_i)\Fn_{X_i}$ cannot always contain a conjugate of $Z$. To control the co-dimension of the zero set of $f_Z$, we bound the rank of $f_Z$. 

\begin{lem}\label{L:rank}
The rank of $Df_Z$ at $(Z', g_1,\ldots, g_k)\in f_Z^{-1}(0)$, or of  $Dg_Z$ at $(Z', g_1,\ldots, g_k)\in g_Z^{-1}(0)$, is at least $$\sum_{i=1}^k \min_{\sigma\in \W} |N_{\Psi}\cap\sigma N_{i}|.$$ 
\end{lem}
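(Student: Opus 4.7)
The plan is to bound the rank of $Df_Z$ from below by restricting to variations in the $g_i$ coordinates with $Z'$ held fixed. A direct computation shows that $g_i \mapsto g_i \exp(\epsilon V_i)$ perturbs $Y_i := \Ad(g_i) X_i$ at first order by $[\Ad(g_i) V_i, Y_i]$, so the $i$-th coordinate of this partial derivative is $V_i \mapsto [Z', [\Ad(g_i) V_i, Y_i]]$, whose image is the subspace $\{[Z', [W, Y_i]] : W \in \Fg\} \subseteq \Fg$. The $k$ coordinates of $f_Z$ depend on disjoint $g_i$ variables, so the rank of $Df_Z$ is at least the sum of the ranks of these $k$ partials, and it suffices to show
\[
\dim\{[Z', [W, Y_i]] : W \in \Fg\} \;\geq\; \min_{\sigma \in \W} |N_\Psi \cap \sigma N_i|
\]
for each $i$ separately.

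At a zero of $f_Z$ we have $[Z', Y_i] = 0$, so $Z'$ and $Y_i$ simultaneously lie in some maximal torus $\Ft_i = \Ad(h_i) \Ft$ (since any two commuting elements of $\Fg$ lie in a common Cartan, and any two Cartans are conjugate). Decomposing $\Fg_\bC$ into the root spaces $\Fg_{\tilde\alpha}$ of $\Ft_i$, both $\ad(Y_i)$ and $\ad(Z')$ preserve each $\Fg_{\tilde\alpha}$, acting as the scalars $\tilde\alpha(Y_i)$ and $\tilde\alpha(Z')$ respectively, so the image in question is the sum of those $\Fg_{\tilde\alpha}$ for which both scalars are nonzero, and its dimension equals the number of such roots. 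Identify each root of $\Ft_i$ with a root of $\Phi$ via $\tilde\alpha = \alpha \circ \Ad(h_i^{-1})$. Since $\Ad(h_i^{-1}) Y_i \in \Ft \cap O_{X_i}$ is a Weyl translate of $X_i$, write $\Ad(h_i^{-1}) Y_i = w_i X_i$, and similarly $\Ad(h_i^{-1}) Z' = w'_i Z$, for some $w_i, w'_i \in \W$. Then $\tilde\alpha(Y_i) \ne 0$ is equivalent to $\alpha \in w_i N_i$ and $\tilde\alpha(Z') \ne 0$ is equivalent to $\alpha \in w'_i N_\Psi$, so the count is
\[
|w_i N_i \cap w'_i N_\Psi| \;=\; |N_\Psi \cap (w'_i)^{-1} w_i N_i| \;\geq\; \min_{\sigma \in \W} |N_\Psi \cap \sigma N_i|.
\]
Summing over $i$ yields the stated bound.

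The group case for $g_Z$ is parallel. Setting $y_i = g_i x_i^{-1} g_i^{-1}$, a zero of $g_Z$ places $Z'$ in $\Fn_{y_i}$, so $Z'$ and $y_i$ lie in a common maximal torus, on whose root spaces $\Ad(y_i) - \id$ acts as a scalar that vanishes precisely when $\tilde\alpha \in \Phi_{y_i}$; and $\Phi_{y_i}$ is $\W$-conjugate to $\Phi_{x_i}$ under the identification above. The partial of $g_Z$ in $g_i$ produces an image of the same form, combining this scalar with $\tilde\alpha(Z')$, and the identical Weyl-group counting delivers the bound with $N_i = N_{x_i}$. The main subtlety is coordinating the two Weyl-group identifications for $Y_i$ and $Z'$ inside the common torus $\Ft_i$ so that the resulting intersection count appears in the form $|N_\Psi \cap \sigma N_i|$ for a single $\sigma \in \W$; this is immediate once the setup is in place, but it does depend on choosing the $h_i$, $w_i$, $w'_i$ carefully and noting that the product $(w'_i)^{-1} w_i$ is the only Weyl element that survives into the final estimate.
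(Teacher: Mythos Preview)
Your proof is correct and follows essentially the same approach as the paper: restrict to variations in each $g_i$ separately (holding $Z'$ fixed), sum the resulting ranks, and for each $i$ use that $Z'$ and $Y_i$ (resp.\ $y_i$) commute to simultaneously conjugate them into a fixed maximal torus, where the image of the partial derivative is read off from the root-space decomposition. The only cosmetic difference is that the paper chooses the conjugating element $h$ so that $\Ad(h)Z'=Z$ exactly, producing a single Weyl element $\sigma$, whereas you land on $w'_iZ$ and $w_iX_i$ and then absorb the extra Weyl element via $|w_iN_i\cap w'_iN_\Psi|=|N_\Psi\cap (w'_i)^{-1}w_iN_i|$; the content is identical.
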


\begin{proof}
We begin with the algebra case. Fix $(Z', g_1,\ldots, g_k)\in f_Z^{-1}(0)$. 

Consider the $i$--th inclusion $G\hookrightarrow O_Z\times G^k$ given by $$g\mapsto (Z', g_1,\ldots, g_{i-1}, g, g_{i+1}, \ldots, g_k).$$
Let $f_Z^i:G\to \Fg^k$ be the composition of this inclusion with $f_Z$. 

The derivative of $f_Z^i$ lies entirely in the $i$--th coordinate of $\Fg^k$, so $$\rank Df_Z\geq \sum_{i=1}^k \rank Df_Z^i.$$ 

If we suppress the unused components of the codomain of $f_Z^i$, we can write $f_Z^i(g)=[Z',\Ad(g)X_i]$. We will compute the rank of $f_Z^i$ at $g_i$. 

We claim that there is an $h\in G$ such that $\Ad(h)Z'=Z$ and $\Ad(hg_i) X_i=\sigma X_i$ for some $\sigma\in \W$. Of course, there is an $h_1\in G$ so that $\Ad(h_1)Z'=Z$. By applying the torus theorem to $\Fn_Z$, we find an $h_2\in G_Z$ so that $\Ad(h_2h_1g_i)X_i \in\Ft$. Set $h=h_2h_1$. At this point we have $\Ad(h)Z'=Z$, $\Ad(hg_i)X_i\in \Ft$. The claim now follows from the fact that if $X$ and $X'$ in $\Ft$ are $\Ad$--related, then they are $\W$--related. To prove the fact: Say $X=\Ad(k_1)X'$. Using the torus theorem, find $k_2\in \Fn_X$ such that $\Ad(k_2k_1)\Ft=\Ft$. Now $\Ad(k_2k_1)X'=X$, and $\Ad(k_2k_1)$ is in the normalizer of the torus, so the action of $\Ad(k_2k_1)$ on $\Ft$ is in the Weyl group. 

Composing $f_Z^i$ with the linear map $\Ad(h)$ does not change its rank. This transformation allows us to assume $Z'=Z$ and $\Ad(g_i)X_i=\sigma X_i$. Now, using the root space decomposition of $\Fg$ we get 
\begin{eqnarray*}
(\im Df_Z^i)_\bC 
&=&
[Z, \Fn_{\sigma X_i}^\perp]_\bC
\\&=&
\left[Z, \bigoplus_{\alpha\in \sigma N_{X_i}}\Fg_\alpha\right]
\\&=&
\bigoplus_{\alpha\in  N_Z\cap \sigma N_{X_i}}\Fg_\alpha.
\end{eqnarray*}
This gives $\rank Df_Z^i= \left| N_Z\cap \sigma N_{X_i} \right|$ and the result follows. 

For the group case, the problem similarly reduces to computing the rank of $g_Z^i(g)=(\Ad(gx_i^{-1}g^{-1})-\id)Z'$, at $g_i$ satisfying $\Ad(g_ix^{-1}g_i^{-1})Z'=Z'$.  There is an $h\in G$ such that $\Ad(h)Z'=Z$ and $h(g_ix_i^{-1}g_i^{-1})h^{-1}=\sigma x$ for some $\sigma\in \W$. Note that $$\Ad(h)g_Z^i(g)=(\Ad( hgx_i^{-1}g^{-1} h^{-1})-\id)\Ad(h)Z'.$$ So as above we may assume $Z'=Z$ and $g_i x_i g_i^{-1}=\sigma x_i$. From here the proof is identical to the algebra case. 
\end{proof}

We may now prove our theorem on when a sum, or product, contains an open set. The proof uses all of the results proven so far. 

\begin{proof}[\emph{\textbf{Proof of Theorem \ref{T:open}}}]
We treat the algebra case, and leave the easy adaptation to the group case to the reader. Assume that the conditions of the theorem hold. For each $Z\in \S$, let $\pi_Z:O_Z\times G^k\to G^k$ be the projection of the domain of $f_Z$ onto $G^k$. 

Again for each $Z\in \S$, the zero set $f_Z^{-1}(0)$ is the union of countably many submanifolds of $O_Z\times G^k$, on each of which $\rank Df_Z$ is constant. This is because $f_Z^{-1}(0)$ is a subvariety, and the set where $f_Z$ has at most any given rank is the union of subvarieties. 

By Lemma \ref{L:rank} and the hypotheses of the theorem, we have $$\rank (f_Z)> \dim O_Z.$$ We conclude that  $f_Z^{-1}(0)$ has co-dimension greater than  $\dim O_Z $; it follows that $\pi_Z f^{-1}_Z(0)$ has measure zero in $G^k$. Hence $$B=\bigcup_{Z\in \S} \pi_Z f^{-1}_Z (0)$$ is measure zero also. For each $(g_1,\ldots, g_k)\notin B$, Lemma \ref{L:maxbad} gives that $\cap_{i=1}^k \Fn_{g_iX_i}=\{0\}$. Using Proposition \ref{P:main}, we conclude that $\sum_{i=1}^k O_{X_i}$ contains an open set. 
\end{proof}

We proceed to some geometric preliminaries followed by our sufficient condition for a sum, or product, not to contain an open set. Given submanifolds $M_1,\ldots, M_k$ of a manifold $M$ we will say that the $M_i$ intersect \emph{totally transversely} at $p\in M$ if $p\in \cap_{i=1}^k M_i$ and, for all proper subsets $S\subset \{1,\ldots, k\}$ and $j\notin S$, we have that $\cap_{i\in S}M_i$ intersects $M_j$ transversely at $p$. This condition is much stronger than pairwise transverse intersections at $p$.

If $M$ is a Riemannian manifold it is equivalent to say that the spaces $(T_p M_i)^\perp$ are linearly independent in $T_pM$. For $k=2$ an intersection is totally transverse at $p\in M_1\cap M_2$ if and only if it is transverse at $p$.  We will need the following general fact. 

\begin{lem}\label{L:tt}
Suppose $M_1,\ldots, M_k$ are submanifolds of a manifold $M$ and that they intersect totally transversely at $p$. Let $g_t^i$ be isotopies of $M_i$, with $g_0^i$ the identity for all $i$. Then for $t$ small enough the intersection $\cap_{i=1}^k g_t^i(M_i)$ is non-empty. 
\end{lem}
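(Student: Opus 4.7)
The plan is to realize the moving intersection as the zero set of a single parametrized map and invoke the implicit function theorem. Pick a neighborhood $U$ of $p$ on which each $M_i$ has a submersive local defining function $\nu_i:U\to\bR^{c_i}$, where $c_i=\codim M_i$; thus $M_i\cap U=\nu_i^{-1}(0)$ and $d\nu_i(p)$ is surjective with kernel $T_pM_i$. For $t$ near $0$, the set $g_t^i(M_i)$ is locally cut out by $\nu_i^t(q):=\nu_i\bigl((g_t^i)^{-1}(q)\bigr)$. Assemble these into
$$F:U\times(-\epsilon,\epsilon)\to\bR^{c_1}\oplus\cdots\oplus\bR^{c_k},\qquad F(q,t)=\bigl(\nu_1^t(q),\ldots,\nu_k^t(q)\bigr),$$
so that $F(q,t)=0$ iff $q\in\bigcap_{i=1}^k g_t^i(M_i)$, and $F(p,0)=0$.

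The crux is to verify that $D_qF(p,0):T_pM\to\bigoplus_i\bR^{c_i}$ is surjective. This is a direct translation of total transversality: the image of the transpose of $d\nu_i(p)$ is $(\ker d\nu_i(p))^\perp=(T_pM_i)^\perp$, so the combined map $(d\nu_1(p),\ldots,d\nu_k(p))$ is surjective iff the subspaces $(T_pM_i)^\perp\subset T_pM$ are linearly independent, which is exactly the Riemannian characterization of total transversality recalled in the paragraph preceding the lemma.

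Given the surjectivity of $D_qF(p,0)$, the implicit function theorem shows that $F^{-1}(0)$ is a smooth submanifold of $U\times(-\epsilon,\epsilon)$ near $(p,0)$ of dimension $\dim M+1-\sum_ic_i$. Moreover, choosing $v\in T_pM$ with $D_qF(p,0)(v)=-\partial_tF(p,0)$, the vector $(v,1)$ lies in $\ker DF(p,0)=T_{(p,0)}F^{-1}(0)$, so the projection $\pi:F^{-1}(0)\to\bR$, $(q,t)\mapsto t$, is a submersion at $(p,0)$. Hence its image contains an open neighborhood of $0$, and for every sufficiently small $t$ there exists $q\in U$ with $F(q,t)=0$, i.e.\ $q\in\bigcap_{i=1}^k g_t^i(M_i)$.

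The only nonroutine step is the linear-algebra translation in the second paragraph; once that identification is set up, everything reduces to the standard submersion/implicit function machinery. Note that the lemma does not give nonempty intersection at $t=0$ as a hypothesis—$p$ is already in the intersection there—and the argument only needs smoothness of the isotopies $g_t^i$ in both $q$ and $t$ together with $g_0^i=\id$, which are exactly the hypotheses provided.
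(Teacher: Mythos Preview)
Your argument is correct and takes a genuinely different route from the paper's. The paper proceeds by induction on $k$: the base case $k=2$ is declared a standard transversality fact, and the inductive step uses that for small $t$ the partial intersection $\cap_{i=1}^{k-1} g_t^i(M_i)$ is a submanifold isotopic to $\cap_{i=1}^{k-1} M_i$ and still transverse to $M_k$ near $p$. You instead package all $k$ constraints into a single defining map $F$ and apply the implicit function theorem once, reading off total transversality directly as surjectivity of $D_qF(p,0)$ via the dual characterization (independence of the annihilators $(T_pM_i)^\perp$). Your approach is more self-contained---it does not defer to an unproved ``standard fact'' or require verifying that the perturbed partial intersections are isotopic---while the paper's inductive argument is shorter on the page and makes the recursive structure of the definition of total transversality do the work. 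Both are valid; yours is arguably the cleaner proof to actually write down in full.
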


\begin{proof}
The $k=2$ case is a standard fact about transversality. The $k>2$ case follows by induction using the fact that $\cap_{i=1}^{k-1} g_t^i(M_i)$ is isotopic to $\cap_{i=1}^{k-1} M_i$ for $t$ small enough and, again for $t$ small enough, $\cap_{i=1}^{k-1} g_t^i(M_i)$ intersects $M_k$ transversely near $p$. 
\end{proof}

\begin{proof}[\emph{\textbf{Proof of Theorem \ref{T:sing}}}]
We consider only the algebra case. Take $Z\in \Ft$ with annihilating root system $\Psi$. Suppose that $N_{X_1}\cap N_{Z}, \cdots, N_{X_k}\cap N_{Z}$ are pairwise disjoint. Define $M_i$ to be the $G_{X_i}$ orbit of $Z$. Recall that $G_{X_i}$ is the stabilizer of $X_i$ and has Lie algebra $\Fn_{X_i}$. 

Note that $M_i$ is a submanifold of $O_Z$ and is contained entirely in $\Fn_{X_i}$. Note further that the tangent space to $Z$ in $M_i$ is $\Fn_{X_i}\cap \Fn_{Z}^\perp$. So the condition on the $X_i$ gives precisely that the $M_i$ intersect totally transversely at $(X_1,\cdots, X_k)$. Hence the previous lemma gives that, for $g_1,\ldots, g_k$ sufficiently close to the identity, $\cap_{i=1}^k M_i$ is non-empty. Since this intersection is contained in $\cap_{i=1}^k \Ad(g_i) n_{X_i}$, we see that this latter intersection is non-empty for all $g_i$ sufficiently close to the identity. Proposition \ref{P:main} now gives that $\sum_{i=1}^k O_{X_i}$ does not contain an open set. 

The proof is identical for the group case. 
\end{proof}


\section{Determination of open sums in $\su(m)$.}\label{S:su_opens}

In this section we will use simpler notation, saying the tuple of root subsystems $(\Phi_{X_1}, \ldots, \Phi_{X_k})$ is open (respectively, singular) if $\sum_{i=1}^k O_{X_i}$ contains an open set (respectively, does not contain an open set). We say ``$(\Phi_1, \ldots, \Phi_k)$ in $A_n$" to indicate that we are studying a tuple $(X_1, \ldots, X_k)$, with all $X_i$ in a Lie algebra $\Fg$ of type $A_n$ (equivalently $\su(m)$, with $m=n+1$), assuming that each $X_i$ has $\Phi_i$ as its annihilating root system.  

\begin{lem}\label{L:su_sing}
The following tuples are singular.
\begin{enumerate}
\item $(A_{r_1},\ldots, A_{r_k})$ in $A_n$ with $\sum_{i=1}^k (n-r_i)\leq n$.
\item $(A_r\times A_r, A_r\times A_r)$ in $A_{2r+1}$.
\end{enumerate}
\end{lem}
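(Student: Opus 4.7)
The plan is to apply Theorem \ref{T:sing} in each case by exhibiting an explicit $\bR$--closed corank-one subsystem $\Psi$ of $\Phi = A_n$ together with Weyl elements $\sigma_i$ realizing the required disjointness. I will use the standard model in which the roots of $A_n$ are $e_i - e_j$ with $i \ne j$ in $\{1,\ldots,n+1\}$; the Weyl group $S_{n+1}$ permutes indices; the $\bR$--closed corank-one subsystems $\Psi$ correspond to nontrivial partitions $\{1,\ldots,n+1\} = S_1 \sqcup S_2$, with $N_\Psi$ the set of roots crossing between $S_1$ and $S_2$. An annihilator of type $A_{r_i}$ records that $X_i$ has a single eigenvalue-block of size $r_i + 1$ supported on some $T_i$, and choosing $\sigma_i$ amounts to replacing $T_i$ by any $T_i' \subset \{1,\ldots,n+1\}$ of the same size; then $\sigma_i N_i = \{e_p - e_q : \{p,q\} \not\subset T_i'\}$. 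In case (2) the eigenvalue structure gives a partition $\{1,\ldots,2r+2\} = T_i'' \sqcup (T_i'')^c$ into two blocks of size $r+1$, and $\sigma_i N_i$ consists of the roots crossing this partition.

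For (1), I would take $\Psi$ with $S_1 = \{1\}$ and $S_2 = \{2,\ldots, n+1\}$, so that $N_\Psi = \{\pm(e_1 - e_j) : j \in S_2\}$ has $2n$ elements. Arrange $1 \in T_i'$ for every $i$; then $\pm(e_1 - e_j) \in \sigma_i N_i$ exactly when $j \in U_i := S_2 \setminus T_i'$, with $|U_i| = n - r_i$. The pairwise disjointness required by Theorem \ref{T:sing} reduces to choosing the $U_i \subset S_2$ pairwise disjoint, which is possible because $\sum_i (n - r_i) \le n = |S_2|$.

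For (2), the naive choice $|S_1| = 1$ fails by pigeonhole, since two $(r+1)$-subsets of the $(2r+1)$-element set $S_2$ must meet. Instead take $S_1 = \{1, 2\}$ and $S_2 = \{3, \ldots, 2r+2\}$, fix any partition $S_2 = A \sqcup A^c$ with $|A| = |A^c| = r$, and set $T_1'' = \{1\} \cup A$ and $T_2'' = \{1\} \cup A^c$. A direct enumeration of the roots crossing both the $(S_1,S_2)$ split and the $(T_i'',(T_i'')^c)$ split yields
\[
\sigma_1 N_1 \cap N_\Psi = \{\pm(e_1 - e_q) : q \in A^c\} \cup \{\pm(e_2 - e_q) : q \in A\},
\]
with $A$ and $A^c$ interchanged for $\sigma_2$. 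These two sets each have $4r$ elements, are disjoint, and together partition $N_\Psi$, so Theorem \ref{T:sing} applies. The main obstacle is exactly this tightness in case (2): with $|\sigma_1 N_1 \cap N_\Psi| + |\sigma_2 N_2 \cap N_\Psi| = |N_\Psi|$ there is no slack, so both the larger choice $|S_1| = 2$ and the complementary arrangement $T_2'' = \{1\} \cup (S_2 \setminus A)$ are essentially forced. Case (1) is by contrast a straightforward pigeonhole count.
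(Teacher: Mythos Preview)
Your proof is correct and follows the same overall strategy as the paper: in both cases you apply Theorem~\ref{T:sing} by exhibiting an explicit corank-one $\Psi$ and Weyl elements making the sets $\sigma_i N_i\cap N_\Psi$ disjoint. Case~(1) is essentially identical to the paper's argument (your $T_i'$ and $U_i$ are the paper's $S_i$ and $S_i^c\cap\{2,\ldots,n+1\}$).

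In case~(2) your construction differs from the paper's. The paper takes $\Psi$ corresponding to the partition of $\{1,\ldots,2r+2\}$ into odd- and even-indexed positions (so the two blocks have sizes roughly $r+1$ each), with $\Phi_1,\Phi_2$ given by explicit alternating sign patterns. You instead take $\Psi$ with $S_1=\{1,2\}$ of size~$2$, and arrange $T_1'',T_2''$ so that they share the index~$1$ but split $S_2$ complementarily. Both choices work; yours is arguably cleaner, since having $|S_1|=2$ makes the enumeration of $N_\Psi$ and the verification of disjointness completely transparent, and avoids any parity case analysis. The paper's choice has the aesthetic feature that the two blocks of $\Psi$ are balanced, but this is not needed for the argument.
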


\begin{proof}
For the first tuple, set $$\Psi=\{\pm(e_p-e_q):2\leq p<q\leq n+1\} .$$  Pick subsets $S_1,\ldots, S_k$ of $\{1, \ldots, n+1\}$ so that (i) $1\in S_i$ for all $i$, (ii) $S_i$ has size $r_i+1$, and (iii) the sets $S_i^c\cap \{2,\ldots, n+1\}$ are disjoint. Set $$\Phi_i=\{e_p-e_q: p\neq q, p,q\in S_i\}.$$ Note that whenever $X_1,\ldots, X_k\in \Ft$ are of the type in question, we can find $\sigma_i\in \W$ so that $\sigma_i \Phi_{X_i}=\Phi_i$. We see that $$N_i\cap N_\Psi= \{\pm(e_1-e_p): p\in S_i^c\cap \{2,\ldots, n+1\},$$ so the $N_i\cap N_\Psi$ are disjoint. By Theorem \ref{T:sing} the tuple is singular. 

For the second tuple, start with $$\Phi_1=\{\pm(e_p-e_q): 1\leq p\leq r+1, r+2\leq q\leq 2r+2\}.$$ Define
\begin{eqnarray*}
F_o &=&  \{p : 1\leq p\leq r+1, p \text{ odd}\},
\\
F_e &=&  \{p : 1\leq p\leq r+1, p \text{ even}\},
\\
S_o &=& F_o+(r+1),
\\
S_e &=& F_e+(r+1).
\end{eqnarray*}
Set 
\begin{eqnarray*}
\Psi &=& \{\pm(e_p-e_q): p,q\in F_o\cup S_o \text{ or } p, q\in F_e\cup S_e\},
\\
\Phi_2 &=& \{\pm(e_p-e_q): p,q\in F_o\cup S_e \text{ or } p, q\in F_e\cup S_o\}.
\end{eqnarray*}
We note that these are the annihilating root systems of 
$$\begin{array}{cll}
Z =& (1,-1,1, -1, \ldots, &1,-1,1,-1,\ldots ), \\
X_1 =&(1,1, \ldots, 1, & -1, -1, \ldots, -1),\\
X_2 =& (\underbrace{1,-1,1,-1, \ldots}_{r+1},& \underbrace{ -1, 1, -1, 1, \ldots}_{r+1}).
\end{array}$$
Then $N_1\cap N_\Psi$ and $N_2\cap N_\Psi$ are disjoint and again we can apply Theorem \ref{T:sing}.
\end{proof}

It follows that a tuple which is \emph{at least as singular} as one of the above tuples is also singular. Say that $(\Phi_1',\cdots, \Phi_k')$ is at least as singular as $(\Phi_1, \cdots, \Phi_k)$ if $\Phi_i\subset \Phi_i'$ for all $i$. This is the only singularity result we will need, so we turn to the problem of showing a tuple is open. To do so, we apply Theorem \ref{T:open}. This requires the following counting lemma. The author thanks Rolf Hoyer for providing the proof included here. 

\begin{lem}\label{L:eclasses1}
Suppose $\sim$ is an equivalence relation on $\{1,\ldots, m\}$ with all equivalence classes of size at most $w$. Fix $1\leq c \leq m-1$. Then 
$$\left| \{ (p,q): p\sim q, 1\leq p \leq c, c+1\leq q\leq m\} \right| \leq \frac{w}{m} c(m-c)$$
with equality if and only if there are $m/w$ equivalence classes and each contains the same number of elements in $\{1,\ldots, c\}$. 
\end{lem}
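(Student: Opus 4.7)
The plan is to partition the pairs according to which equivalence class contains them. Let $C_1, \ldots, C_s$ be the equivalence classes of $\sim$, and set
$$a_j = |C_j \cap \{1,\ldots,c\}|, \qquad b_j = |C_j \cap \{c+1,\ldots,m\}|.$$
Then the set whose cardinality we wish to bound decomposes as a disjoint union over $j$ of $(C_j\cap\{1,\ldots,c\})\times(C_j\cap\{c+1,\ldots,m\})$, so its size is $\sum_{j=1}^{s} a_j b_j$. The constraints are $\sum a_j = c$, $\sum b_j = m-c$, and $a_j + b_j = |C_j| \leq w$ for each $j$. We also have $s \geq m/w$, since $m = \sum |C_j| \leq sw$.

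The main step is then a two-part estimate. First, since $b_j \leq w - a_j$,
$$\sum_{j=1}^s a_j b_j \;\leq\; \sum_{j=1}^s a_j(w - a_j) \;=\; wc - \sum_{j=1}^s a_j^2.$$
Second, by Cauchy--Schwarz, $\sum a_j^2 \geq (\sum a_j)^2/s = c^2/s \geq c^2 w/m$. Combining,
$$\sum_{j=1}^s a_j b_j \;\leq\; wc - \frac{c^2 w}{m} \;=\; \frac{w}{m}\,c(m-c),$$
which is the desired inequality.

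For the equality case, each of the two inequalities must be tight. Tightness of the first requires $b_j = w - a_j$ for all $j$, i.e.\ every class has size exactly $w$; this forces $s = m/w$. Tightness of the Cauchy--Schwarz bound requires all $a_j$ to be equal, i.e.\ each equivalence class contains the same number of elements of $\{1,\ldots,c\}$ (necessarily $cw/m$). Conversely, under these two conditions, both inequalities are equalities and a direct computation recovers $\frac{w}{m}c(m-c)$.

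The argument is essentially routine; the only mild subtlety is recognizing that the two natural bounds ($b_j \leq w - a_j$ and Cauchy--Schwarz on the $a_j$) combine so cleanly, and that the two equality conditions are exactly the ones stated in the lemma. No step looks like a serious obstacle.
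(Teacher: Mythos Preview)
Your second step contains an inequality that points the wrong way. You claim $c^2/s \geq c^2 w/m$, which is equivalent to $s \leq m/w$. But since each of the $s$ classes has size at most $w$, one has $m=\sum_j |C_j|\leq sw$, so in fact $s\geq m/w$ and hence $c^2/s\leq c^2 w/m$. Concretely, take $m=6$, $w=3$, $c=3$ with classes $\{1\},\{2\},\{3\},\{4,5,6\}$: here $\sum a_j^2=3<4.5=c^2w/m$, and your chain yields only $\sum a_j b_j\leq wc-\sum a_j^2=6$, which is strictly weaker than the bound $\frac{w}{m}c(m-c)=4.5$ asserted by the lemma. The two-step decomposition cannot be repaired as written: when there are many small classes the replacement $b_j\leq w-a_j$ is very loose, and Cauchy--Schwarz on $s$ terms gives a lower bound on $\sum a_j^2$ that \emph{decreases} with $s$. (Your argument is correct precisely in the boundary case $s=m/w$, where every class already has size $w$.)

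The paper proves the bound by induction on the number $d$ of equivalence classes. One may assume the largest class has size exactly $w$; writing $a_d,b_d$ for its intersections with $\{1,\dots,c\}$ and $\{c+1,\dots,m\}$ (so $a_d+b_d=w$) and $a=c-a_d$, $b=(m-c)-b_d$, $f=\sum_{j<d}a_jb_j$, the inductive hypothesis gives $f\leq \frac{w}{a+b}ab$, and a one-variable calculus check in $a_d$ then shows $f+a_d(w-a_d)\leq \frac{w}{m}c(m-c)$. Your framework with the $a_j,b_j$ is the right starting point; what is missing is an argument that genuinely uses the global constraint $\sum_j(a_j+b_j)=m$ together with the local bounds $a_j+b_j\leq w$, rather than discarding the former in the first step.
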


\begin{proof}
Suppose there are $d$ equivalence classes, $d\geq \frac{m}{w}$. We induct on $d$. Suppose the $j$--th equivalence class has $a_j$ elements in $\{1,\ldots, c\}$ and $b_j$ elements in $\{c+1,\ldots, m\}$. Without loss of generality we can assume that the $d$--th equivalence class is the largest, and that $a_d+b_d=w$, or else we can replace $w$ with a smaller value and prove this stronger form of the inequality. Set 
$$a=\sum_{j=1}^{d-1} a_j, \quad b=\sum_{j=1}^{d-1} b_j, \quad f=\sum_{j=1}^{d-1} a_j b_j.$$
By induction we may assume $ f\leq \frac{w}{a+b} ab$. The desired inequality is 
$$w(a+a_d)(b+w-a_d)-(w+a+b)(f+a_d(w-a_d))\geq 0.$$
The derivative of this expression with respect to $a_d$ is 
$$-2aw+2a_d(a+b).$$
We find that the only critical point is at $a_d=\frac{wa}{a+b}$ and it is a local minimum. 

It now suffices to check the desired inequality when $f= \frac{w}{a+b} ab$ and $a_d=\frac{wa}{a+b}$. This gives 
\begin{eqnarray*}
&&
w\left( a+\frac{aw}{a+b}\right)\left(b+\frac{bw}{a+b}\right)-\left(w+a+b\right)\left(\frac{wab}{a+b}+\frac{w^2ab}{(a+b)^2} \right)
\\&=&
wab\left( \left(1+\frac{w}{a+b}\right)^2-\left(w+a+b\right)\left(\frac{1}{a+b}+\frac{w}{(a+b)^2}\right)\right)
\\& =&
0.
\end{eqnarray*}

\end{proof}

Now, given $\Phi_X=A_{w_1-1}\times\cdots\times A_{w_d-1}$ in $A_n$, set $w=\max{w_i}$, and $m=n+1$. Consider $\Psi=A_{c-1}\times A_{n-c}$, which is the general form of a co-rank one root subsystem. Define an equivalence relation $\sim$ on $\{1,\ldots, m\}$ by $i\sim j$ if $e_i-e_j\in \Phi_X$. The lemma gives that $$|\Phi_X\cap N_\Psi|\leq \frac{w}{m} |N_\Psi|.$$ Given this result, it will be convenient to rewrite 
$$\sum_{i=1}^k \min_{\sigma\in\W}|N_{i}\cap\sigma N_\Psi|>|N_\Psi|$$ as
$$\sum_{i=1}^k \max_{\sigma\in\W}|\Phi_{i}\cap\sigma N_\Psi|<(k-1)|N_\Psi|.$$

The following result is a restatement of Theorem \ref{T:suopens}. 

\begin{thm}\label{T:su_opens}
Suppose the largest factor of $\Phi_i$ is $A_{r_i}$ for each $i$ . Then $(\Phi_1,\ldots, \Phi_k)$ in $A_n$ is open if and only if $\sum_{i=1}^k (r_i+1)\leq (k-1)(n+1)$ and $(\Phi_1,\ldots, \Phi_k)\neq (A_r\times A_r, A_r\times A_r)$ in $A_{2r+1}$. 
\end{thm}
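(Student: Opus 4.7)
My plan is to combine the two general tools of Section \ref{S:mains}: Lemma \ref{L:su_sing} for the necessity direction and Theorem \ref{T:open} together with the counting bound of Lemma \ref{L:eclasses1} for the sufficiency direction.

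The necessity direction is immediate. If $\sum_{i=1}^k (r_i+1)>(k-1)(n+1)$, then $\sum_{i=1}^k (n-r_i)\leq n$; since by definition of $r_i$ each $\Phi_i$ contains $A_{r_i}$, the tuple $(\Phi_1,\ldots,\Phi_k)$ is at least as singular as $(A_{r_1},\ldots,A_{r_k})$, which is singular by Lemma \ref{L:su_sing}(1). The exceptional pair is singular directly by Lemma \ref{L:su_sing}(2).

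For sufficiency, I plan to verify the hypothesis of Theorem \ref{T:open} for every $\bR$--closed corank-one subsystem of $A_n$. Up to the Weyl action these are exactly the subsystems $\Psi_c$ of type $A_{c-1}\times A_{m-c-1}$ for $c\in\{1,\ldots,m-1\}$ (with $m=n+1$), with $|N_{\Psi_c}|=2c(m-c)$. Writing $w_i=r_i+1$ and applying Lemma \ref{L:eclasses1} to the equivalence relation on $\{1,\ldots,m\}$ whose classes are the index sets of the simple factors of $\Phi_i$ together with singletons for the uncovered indices, I obtain $\max_{\sigma\in\W}|\Phi_i\cap\sigma N_{\Psi_c}|\leq (w_i/m)|N_{\Psi_c}|$. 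Summing and invoking $\sum_i w_i\leq(k-1)m$ gives
$$\sum_i \max_\sigma |\Phi_i\cap\sigma N_{\Psi_c}|\leq \frac{|N_{\Psi_c}|}{m}\sum_i w_i\leq (k-1)|N_{\Psi_c}|.$$
Both sides of the desired strict inequality are integers, so it suffices to locate any real strict inequality somewhere in the chain.

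The main obstacle is the saturated case $\sum_i w_i = (k-1)m$. The equality clause of Lemma \ref{L:eclasses1} requires that $\Phi_i=(A_{w_i-1})^{m/w_i}$ (so in particular $w_i\mid m$) and that $c$ be a multiple of $m/w_i$; if either fails for some $i$, the bound for $\Psi_c$ is strict and Theorem \ref{T:open} applies. To rule out simultaneous saturation for every $i$ at some $c\in\{1,\ldots,m-1\}$, I will use that, after discarding any trivial orbits $X_i=0$ (which are inert to the question at hand), one has $w_i<m$; combined with $w_i\mid m$ this forces $w_i\leq m/2$. Then $(k-1)m=\sum_i w_i \leq km/2$ gives $k\leq 2$, and for $k=2$ the constraint $w_1+w_2=m$ with $w_i\leq m/2$ forces $w_1=w_2=m/2$ and $\Phi_1=\Phi_2=A_{m/2-1}\times A_{m/2-1}$ in $A_{m-1}$, which is exactly the excluded exceptional pair. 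This eliminates the saturated configuration and completes the sufficiency argument via Theorem \ref{T:open}.
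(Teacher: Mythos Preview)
Your proposal is correct and follows essentially the same route as the paper's proof: Lemma~\ref{L:su_sing} for necessity, and Theorem~\ref{T:open} together with the counting bound and equality clause of Lemma~\ref{L:eclasses1} for sufficiency, with the saturated case $\sum_i w_i=(k-1)m$ analyzed via $w_i\mid m\Rightarrow w_i\le m/2\Rightarrow k\le 2$ exactly as the paper does. One small caveat: your device of ``discarding any trivial orbits $X_i=0$'' is not quite innocuous, since after discarding you could land on the exceptional pair $(A_r\times A_r,A_r\times A_r)$ even though the original tuple was not of that form (e.g.\ $(A_n,A_r\times A_r,A_r\times A_r)$ in $A_{2r+1}$); the paper sidesteps this by implicitly assuming $X_i\neq 0$ throughout, which is the natural reading of the statement, and under that assumption your argument and the paper's coincide.
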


\begin{proof}
The inequality given is equivalent to $\sum_{i=1}^k (n- r_i)\geq n+1$. If this inequality fails, then $\sum_{i=1}^k (n- r_i)\leq n$, and so the tuple is at least as singular as one of the tuples in Lemma \ref{L:su_sing}, and hence is singular. This lemma also gives that $(A_r\times A_r, A_r\times A_r)$ in $A_{2r+1}$ is singular.

It remains only to show that if $\sum_{i=1}^k (r_i+1)\leq (k-1)(n+1)$ and the the tuple is not $(A_r\times A_r, A_r\times A_r)$ in $A_{2r+1}$ then the tuple is open. 

Lemma \ref{L:eclasses1} gives that
\begin{equation*}\tag{$\diamondsuit$}
\left| \Phi_i\cap N_\Psi\right| \leq \frac{r_i+1}{n+1} \left| N_\Psi \right|
\end{equation*} 
for all $i$. So 
\begin{eqnarray*}
\sum_{i=1}^k \left| \Phi_i\cap N_\Psi \right| 
&\leq& 
\left(\sum_{i=1}^k (r_i+1)\right) \frac{\left| N_\Psi \right|}{n+1}
\\&\leq&
(k-1)(n+1) \frac{\left| N_\Psi \right|}{n+1}= (k-1) \left| N_\Psi \right|.
\end{eqnarray*}
So, it suffices to show that if $\sum_{i=1}^k (r_i+1)=(k-1)(n+1)$ then equality in $(\diamondsuit)$ can hold for all $i$ only if the tuple is $(A_r\times A_r, A_r\times A_r)$ in $A_{2r+1}$. 

Lemma \ref{L:eclasses1} gives that if equality holds for all $i$, then $r_i+1\mid n+1$ for all $i$. Hence $r_i+1\leq \frac{n+1}2$ for all $i$. So 
$$(k-1)(n+1)= \sum_{i=1}^k (r_i+1)\leq k \frac{n+1}2.$$
We conclude that $k=2$ and $r_1+1=r_2+1=\frac{n+1}2$. 

Lemma \ref{L:eclasses1} also gives that the tuple is $(A_r\times A_r, A_r\times A_r)$ in $A_{2r+1}$. This corresponds to the statement that if equality holds, all equivalence classes are of the same size. 
\end{proof}


\section{$L^2$--singular dichotomy in $\su(m)$.}\label{S:su_di}

To prove our $L^2$--singular dichotomy we now need to verify that the conditions of Theorem \ref{T:L2} for the tuples which were determined to be open in the last section. To do so we need the following stronger form of Lemma \ref{L:eclasses1}:

\begin{lem}\label{L:eclasses2}
Let $\sim$ and $\equiv$ be two equivalence relations on $\{1,\ldots, m\}$ so that each $\sim$ equivalence class has size at most $w$ and $\equiv$ has $l$ equivalence classes. Then 
\begin{eqnarray*}
&& 
\left| \{(p,q): 1\leq p<q\leq m, p\sim q, p\not\equiv q\}\right| 
\\&&\leq 
\frac{w}{m}\left| \{(p,q): 1\leq p<q\leq m, p\not\equiv q\}\right|-E
\end{eqnarray*}
where $E=0$ if $\sim$ has $m/w$ equivalence classes and $E=\frac{l}2$ otherwise. 
\end{lem}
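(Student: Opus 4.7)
The plan is to reduce Lemma \ref{L:eclasses2} to $l$ parallel applications of Lemma \ref{L:eclasses1}, one for each $\equiv$-class against its complement, and then assemble via double-counting. Let $C_1, \ldots, C_l$ denote the $\equiv$-equivalence classes, with $|C_k| = c_k$, and for each $k$ set
$$v_k = \bigl|\{(p,q) : p \in C_k,\; q \notin C_k,\; p \sim q\}\bigr|.$$
An ordered pair $(p,q)$ with $p \sim q$ and $p \not\equiv q$ lies in exactly one $v_k$ (namely the one indexed by the $\equiv$-class of $p$), so $\sum_k v_k = 2\,\mathrm{LHS}$. Applying Lemma \ref{L:eclasses1} to the bisection $\{C_k, C_k^c\}$ with cut size $c = c_k$ gives $v_k \leq \frac{w}{m}c_k(m-c_k)$. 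Summing over $k$ and dividing by two yields
$$\mathrm{LHS} \leq \frac{w}{2m}\Bigl(m^2 - \sum_k c_k^2\Bigr) = \frac{w}{m}\sum_{k<k'} c_k c_{k'},$$
which is the claim with $E = 0$.

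To upgrade to $E = l/2$ when $\sim$ does not have exactly $m/w$ equivalence classes, I would sharpen each per-$k$ bound by one integer unit, that is, $v_k \leq \frac{w}{m}c_k(m-c_k) - 1$ for each $k$ with $0 < c_k < m$. The equality clause of Lemma \ref{L:eclasses1} already certifies that $v_k$ is strictly below $\frac{w}{m}c_k(m-c_k)$; the task is to push this strict inequality to at least one integer unit. My first approach would be a \emph{merge} operation on $\sim$: if $d > m/w$, pigeonhole often supplies two $\sim$-classes $j, j'$ with $s_j + s_{j'} \leq w$, and merging them into a single class can only increase $\sum_j a_j b_j$ (by $a_j b_{j'} + a_{j'} b_j$). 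Iterating down to a configuration with $d = m/w$ and tracking how much $\sum_j a_j b_j$ grows should produce the needed integer gap; summing over $k$ then gives $\mathrm{LHS} \leq \frac{w}{m}\sum_{k<k'} c_k c_{k'} - l/2$.

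The main obstacle is establishing this integer sharpening uniformly. Pure merging can fail when every pair of $\sim$-classes has combined size exceeding $w$ (e.g.\ $s = (4,3,3)$ with $w = 5$), and the merge gain $a_j b_{j'} + a_{j'} b_j$ can itself vanish in degenerate arrangements. Thus the proof will likely combine merges with a secondary \emph{shift} operation (moving a single element between $\sim$-classes to reduce to the merge-available case), while carefully tracking the integrality of the $a_j, b_j$ to extract the desired unit improvement over the optimum of Lemma \ref{L:eclasses1}. An alternative that may be cleaner is to reprise the inductive Lagrangian proof of Lemma \ref{L:eclasses1} directly and quantify the slack at each inductive step, accumulating the total discrepancy between the actual and ideal configurations.
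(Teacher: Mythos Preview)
Your reduction for the $E=0$ case is exactly the paper's argument: cut along each $\equiv$-class $T_s$ against its complement, apply Lemma~\ref{L:eclasses1} to each cut to get $|H_s|\le\frac{w}{m}|N_s|$, sum over $s$, and halve. For $E=l/2$ the paper does far less than you propose. It simply notes that when $\sim$ does not have $m/w$ classes, the equality clause of Lemma~\ref{L:eclasses1} forces every per-class inequality to be \emph{strict}, and then asserts in one line that summing and halving produces the extra $l/2$. There is no merge operation, no shift, no integer-gap analysis in the paper.

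Your worry that ``strict'' need not mean ``deficit at least one'' is well founded, and in fact the lemma as stated is false. Take $m=3$, $w=2$, let $\sim$ have classes $\{1,2\}$ and $\{3\}$, and let $\equiv$ be equality (so $l=3$). Then $\sim$ does not have $m/w=3/2$ classes, so $E=3/2$; the left-hand side equals $1$ (only the pair $(1,2)$), while the right-hand side is $\tfrac{2}{3}\cdot 3-\tfrac{3}{2}=\tfrac12$. So the paper's one-line passage from strict inequalities to $E=l/2$ is a genuine gap, and your merge/shift program is aimed at a target that cannot be hit in this generality. Any repair should be made at the level of the downstream application (Lemma~\ref{L:L2count}), where the global constraint $\sum_i(r_i+1)\le(k-1)(n+1)$ is available, rather than by trying to salvage Lemma~\ref{L:eclasses2} as stated.
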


\begin{proof}
Let the equivalence classes of $\equiv$ be $T_1,\ldots, T_k$. Define 
\begin{eqnarray*}
N &=& \{(p,q): 1\leq p<q\leq m, p \not\equiv q\}, 
\\
N_s &=& \{(p,q): 1\leq p<q\leq m, \text{ exactly one of $p,q$ is in } T_s\},
\\ 
H &=& \{(p,q)\in N: p\sim q\},
\\
H_s &=& \{(p,q)\in N_s: p\sim q\}.
\end{eqnarray*}
Now, by Lemma \ref{L:eclasses1},
\begin{equation*}\tag{$\clubsuit$}
\left| H_s\right| \leq \frac{w}{m} \left| N_s\right|.
\end{equation*}
Summing this over $s$ and dividing by two we get $\left|H\right| \leq \frac{w}{m}\left| N\right|$, which is what we wished to show when $E=0$. Now, if $\sim$ does not have $m/q$ equivalence classes then $(\clubsuit)$ is never sharp and we get the desired inequality with $E=l/2$. 
\end{proof}

The following theorem completes the proof of our $L^2$--singular dichotomy by establishing the hypotheses for the $L^2$ theorem, Theorem \ref{T:L2}, for every tuple which is open.

\begin{lem}\label{L:L2count}
Suppose $(\Phi_1,\ldots, \Phi_k)$ has $\sum_{i=1}^k (r_i+1)\leq (k-1)(n+1)$ and $(\Phi_1,\ldots, \Phi_k)\neq (A_r\times A_r, A_r\times A_r)$ in $A_{2n+1}$. Let $\Psi$ be a proper $\bR$--closed root subsystem. Then 
$$\sum_{i=1}^k \left| \Phi_i\cap N_\Psi\right|\leq (k-1)\left| N_\Psi\right| -\corank(\Psi).$$
\end{lem}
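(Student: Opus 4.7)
The plan is to apply Lemma \ref{L:eclasses2} to each $\Phi_i$ against $\Psi$, sum the resulting bounds, and then split the argument according to whether the hypothesis $\sum_i(r_i+1) \leq (k-1)(n+1)$ is strict or an equality. Since in type $A_n$ every $\bR$--closed root subsystem comes from a partition of $\{1,\ldots,m\}$ (where $m=n+1$) into $l$ blocks of sizes $c_1,\ldots,c_l$, we have $\corank(\Psi) = l-1$ and $|N_\Psi| = m^2 - \sum_s c_s^2$. I may assume $r_i < n$ for every $i$, since a trivial summand $X_i = 0$ decreases both sides of the desired inequality by $|N_\Psi|$ and reduces the hypothesis to the correct one for the resulting $(k-1)$-tuple. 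Lemma \ref{L:eclasses2}, applied with $\sim$ coming from $\Phi_i$ (max class size $w_i = r_i+1$) and $\equiv$ coming from $\Psi$, and doubled to pass from unordered to ordered pairs, gives
$$|\Phi_i \cap N_\Psi| \leq \frac{r_i+1}{m}|N_\Psi| - 2E_i,$$
where $E_i = 0$ exactly when $\Phi_i = (A_{r_i})^{m/(r_i+1)}$ and $E_i = l/2$ otherwise. Summing over $i$ yields
$$\sum_{i=1}^k |\Phi_i \cap N_\Psi| \leq \frac{\sum_i(r_i+1)}{m}|N_\Psi| - 2\sum_i E_i.$$

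In the strict case $\sum_i(r_i+1) \leq (k-1)m - 1$, the right-hand side is at most $(k-1)|N_\Psi| - |N_\Psi|/m$, so it suffices to verify $|N_\Psi| \geq (l-1)m$. This is the elementary estimate $\sum_s c_s^2 \leq m\cdot\max_s c_s \leq m(m-l+1)$, giving $|N_\Psi| \geq m^2 - m(m-l+1) = (l-1)m$.

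The hard part will be the equality case $\sum_i(r_i+1) = (k-1)m$, where we need $2\sum_i E_i \geq l-1$, and the obstacle is to rule out $E_i = 0$ for every $i$. I would reprise the equality analysis in the proof of Theorem \ref{T:su_opens}: if all $E_i$ vanished, then each $\Phi_i = (A_{r_i})^{m/(r_i+1)}$ with $(r_i+1)\mid m$, and since $r_i < n$ each $r_i+1$ is a \emph{proper} divisor of $m$, hence $r_i+1 \leq m/2$. Then $(k-1)m = \sum_i(r_i+1) \leq km/2$ forces $k = 2$ and $r_1 + 1 = r_2 + 1 = m/2$; writing $r = r_1 = r_2$ gives $n = 2r+1$ and the tuple $(A_r\times A_r,\, A_r\times A_r)$ in $A_{2r+1}$, which is exactly the excluded configuration. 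Consequently at least one $E_i = l/2$, so $2\sum_i E_i \geq l \geq l-1$, completing the proof.
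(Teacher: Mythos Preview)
Your proposal is correct and follows essentially the same route as the paper: apply Lemma~\ref{L:eclasses2} to each $\Phi_i$, sum, and split into the strict versus equality cases of $\sum_i(r_i+1)\leq (k-1)m$, using the divisibility constraint in the equality case to force the excluded tuple. Your explicit reduction to $r_i<n$ and your proof of $|N_\Psi|\geq (l-1)m$ via $\sum_s c_s^2\leq m\cdot\max_s c_s\leq m(m-l+1)$ are minor presentational variants of the paper's argument (the paper phrases the latter as minimizing $|N_\Psi|/(n+1)$ over corank-$d$ subsystems at $\Psi=A_{n-d}$), but the structure and key ideas are identical.
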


\begin{proof}
We begin with the claim that 
$$\frac{\left|N_\Psi\right|}{n+1}> \corank(\Psi)$$
for proper $\bR$--closed root subsystems $\Psi$. Among co-rank $d$ subsystems, the left hand size is minimized when $\Psi$ is of type $A_{n-d}$, where the inequality becomes
$$ \frac{\left|N_\Psi\right|}{n+1}=d+\frac{d(n-d)}{n+1}>d.$$
The claim is proved.

Now, say $\sum_{i=1}^k (r_i+1)\leq (k-1)(n+1)-1.$ Using Lemma \ref{L:eclasses2} and the claim, we see that
\begin{eqnarray*}
\sum_{i=1}^k \left| \Phi_i\cap N_\Psi\right| 
&\leq &
\left(\sum_{i=1}^k (r_i+1)\right) \frac{\left|N_\Psi\right|}{n+1}
\\&\leq&
(k-1)\left|N_\Psi\right| - \corank(\Psi)
\end{eqnarray*}
as desired. 

It remains to check the case where $\sum_{i=1}^k (r_i+1)=(k-1)(n+1)-1$. Lemma \ref{L:eclasses2} gives that
$$\left| \Phi_i\cap N_\Psi\right| \leq \frac{r_i+1}{n+1} \left| N_\Psi\right| -E_i$$ where $E_i=\corank(\Psi)+1$ unless $\Phi_i$ is $(A_{r_i})^\frac{n+1}{r_i+1}$, in which case $E_i=0$. (It may seem like the $E$ term has doubled from the lemma, but that is because the lemma only considers positive roots.)

We have to prove that we cannot have all $E_i=0$. If this were the case, we would have $r_i+1\mid n+1$, so $r_i+1\leq \frac{n+1}2$ for all $i$. As in Theorem \ref{T:su_opens} we get that $k=2$ and the tuple is $(A_r\times A_r, A_r\times A_r)$ in $A_{2r+1}$.
\end{proof}


\bibliography{mybib}{}
\bibliographystyle{amsplain}
\end{document}